\theoremstyle{plain}
\newtheorem{theorem}{Theorem}[section]
\newtheorem{lemma}[theorem]{Lemma}
\newtheorem{corollary}[theorem]{Corollary}
\newtheorem{proposition}[theorem]{Proposition}
\newtheorem{conjecture}[theorem]{Conjecture}
\theoremstyle{definition}
\newtheorem{remark}[theorem]{Remark}
\theoremstyle{definition}
\newtheorem{definition}[theorem]{Definition}
\newtheorem{example}[theorem]{Example}
\DeclareMathOperator{\Aut}{Aut}
\DeclareMathOperator{\Hom}{Hom}
\DeclareMathOperator{\Id}{Id}
\DeclareMathOperator{\Pic}{Pic}
\DeclareMathOperator{\Ext}{Ext}
\DeclareMathOperator{\ch}{ch}
\DeclareMathOperator{\td}{td}
\DeclareMathOperator{\chern}{c}
\DeclareMathOperator{\D}{D}
\def\dar[#1]{\ar@<2pt>[#1]\ar@<-2pt>[#1]}
\DeclareMathOperator{\Coh}{\mathbf{Coh}}
\newcommand\matR{\mathbb{R}}
\newcommand\matQ{\mathbb{Q}}
\newcommand\matZ{\mathbb{Z}}
\newcommand\matC{\mathbb{C}}
\newcommand\matP{\mathbb{P}}
\title[Categorical vs topological entropy on surfaces]{Categorical vs topological entropy of autoequivalences of surfaces}
\author{DOMINIQUE MATTEI}
\address{Institut de Math\'ematiques de Toulouse ; UMR5219 \newline 
\indent UPS, F-31062 Toulouse Cedex 9, France}
\email{dmattei@math.univ-toulouse.fr}
\begin{document}
\selectlanguage{british}
\maketitle





\begin{abstract}
In this paper, we give an example of an autoequivalence with positive categorical entropy (in the sense of Dimitrov, Haiden, Katzarkov and Kontsevich) for any surface containing a $(-2)$-curve. Then we show that this equivalence gives another counter-example to a conjecture proposed by Kikuta and Takahashi. In a second part, we study the action on cohomology induced by spherical twists composed with standard autoequivalences on a surface $S$ and show that their spectral radii correspond to the topological entropy of the corresponding automorphisms of $S$.
\end{abstract}
\vspace{1\baselineskip}

\section{Introduction}

%

Recently, Dimitrov-Haiden-Katzarkov-Kontsevich \cite{MR3289326} introduced a categorical analogue of the topological entropy, namely the \textit{categorical entropy} (see section \ref{catentropy}), in the context of triangulated categories. A typical example of such is the derived category $\D^b(X):=\D^b(\Coh(X))$ of coherent sheaves on a variety $X$. In \cite{MR3903564}, Kikuta and Takahashi proposed a Gromov-Yomdin (\cite{MR2026895}, \cite{MR880035}, \cite{MR889979}) type conjecture:

\begin{conjecture}\label{KTConj}
Let $X$ be a smooth projective variety over $\matC$. For any autoequivalence $\phi \in \Aut(\D^b(X))$, we have
$$h_0(\phi) = \log \rho(HH_\bullet(\phi))$$
where $h_0$ is the categorical entropy (valued in $0$), $HH_\bullet(\phi)$ is the $\matC$-linear isomorphism induced by $\phi$ on the Hochschild homology group $HH_\bullet(X)$ and $\rho$ denotes the spectral radius.
\end{conjecture}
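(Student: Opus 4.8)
Conjecture~\ref{KTConj} turns out to be false, so what I would really do is \emph{refute} it, by an explicit counter-example available uniformly on every surface $S$ containing a $(-2)$-curve $C$; write $\iota\colon\matP^1\xrightarrow{\sim}C\hookrightarrow S$. The first step is a package of local computations along $C$. By adjunction $K_S\cdot C=0$, hence $\omega_S|_C\cong\mathcal O_C$ and $N_{C/S}\cong\mathcal O_{\matP^1}(-2)$; consequently every sheaf $\iota_*\mathcal O_{\matP^1}(a)$ is a spherical object of $\D^b(S)$, and a short computation (using $\iota$, or the Koszul resolution $0\to\mathcal O_S(-C)\to\mathcal O_S\to\mathcal O_C\to 0$) identifies $\Hom^\bullet_S(\iota_*\mathcal O(a),\iota_*\mathcal O(b))$ with $H^\bullet(\matP^1,\mathcal O(b-a))\oplus H^\bullet(\matP^1,\mathcal O(b-a-2))[-1]$, so it has dimension $2\max(|a-b|,1)$, while the Euler pairing $\chi(\iota_*\mathcal O(a),\iota_*\mathcal O(b))$ is constantly $2$. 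Set $E_1:=\mathcal O_C$ and $E_2:=\mathcal O_C\otimes\mathcal O_S(C)\cong\iota_*\mathcal O_{\matP^1}(-2)$: both are spherical, $\dim_{\matC}\Hom^\bullet_S(E_1,E_2)=\dim_{\matC}\Hom^\bullet_S(E_2,E_1)=4$, and the autoequivalence I would test is the composite of the two spherical twists
\[
\phi:=T_{E_1}\circ T_{E_2}\ \in\ \Aut(\D^b(S))
\]
(equivalently one could take $\phi=T_{\mathcal O_C}\circ(-\otimes\mathcal O_S(C))$, whose square is $T_{E_1}\circ T_{E_2}$ up to a line-bundle twist).

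For the lower bound $h_0(\phi)>0$ I would work with the complexity $\delta_0(G,\phi^n G)$ of a split generator, which for $t=0$ is controlled from below by the growth of the total dimensions $\sum_k\dim\Hom(E_i,\phi^n E_j[k])$. Unwinding the exact triangles defining the successive twists in $\phi^n(E_1)$, each twist around $E_1$ or $E_2$ enlarges the object in play by as many shifted copies of $E_1$ (resp.\ $E_2$) as the dimension of a transition $\Hom$-space, and here all these dimensions equal $4$; this puts $\langle E_1,E_2\rangle$ in the ``wild'', hyperbolic regime, where these dimensions grow exponentially, in contrast with the dimension-$\le2$ (finite or affine) regime where the growth is only polynomial and the entropy is zero. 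Concretely $h_0(\phi)=\log\lambda$ for some $\lambda>1$ governed by the transition dimension $r=4$: in the model case of a $\Hom$-complex concentrated in one degree, $\lambda$ is the larger root of $x^2-(r^2-2)x+1$, which exceeds $1$ as soon as $r\ge 3$.

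For the cohomological side I would show $\rho(HH_\bullet(\phi))=1$. The twist $T_{E_i}$ acts on $HH_\bullet(S)$ as the Mukai reflection $v\mapsto v+\langle v,v(E_i)\rangle\,v(E_i)$, so $HH_\bullet(\phi)$ is the product $s_1s_2$ of the two reflections. Since $E_1,E_2$ are spherical and $\chi(E_1,E_2)=2$, the Gram matrix of $\{v(E_1),v(E_2)\}$ for the Mukai pairing is $\left(\begin{smallmatrix}-2&-2\\-2&-2\end{smallmatrix}\right)$, which is degenerate; a product of two reflections in vectors that span a subspace on which the form degenerates is unipotent (the affine, rather than the hyperbolic, situation), so $s_1s_2$ is quasi-unipotent and $\rho(HH_\bullet(\phi))=1$. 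Combining the two steps, $h_0(\phi)>0=\log\rho(HH_\bullet(\phi))$, so $\phi$ violates Conjecture~\ref{KTConj}; and since $(S,C)$ was arbitrary, one obtains such a $\phi$ on every surface containing a $(-2)$-curve.

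The genuine obstacle is the strict positivity $h_0(\phi)>0$: it is invisible to the action on $K_0(\D^b(S))$ and on $HH_\bullet(S)$ --- both quasi-unipotent here --- so it must be extracted from the graded $\Hom$-complexes themselves, and one must verify that the exponential growth forced by the $4$-dimensional transition spaces genuinely survives the cancellations in the long exact sequences, i.e.\ that $\langle E_1,E_2\rangle$ really lies in the hyperbolic range $r\ge 3$ and not in the borderline affine range $r=2$, where a naive multiplicity count would still spuriously suggest positivity. The statement about $HH_\bullet$ is, by contrast, a finite linear-algebra verification once the Mukai vectors $v(E_1),v(E_2)$ and the degeneracy of their Gram matrix are in hand.
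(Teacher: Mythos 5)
Your overall strategy is the same as the paper's: the conjecture is false, and the counterexample is (up to squaring and a line-bundle twist) exactly the paper's autoequivalence $T_{\mathcal{O}_C}\circ(-\otimes\mathcal{O}_S(C))$ on any surface with a $(-2)$-curve. Your treatment of the cohomological side is sound in substance: the Mukai vectors are $v(E_1)=[C]+[x]$, $v(E_2)=[C]-[x]$, their Gram matrix for the Mukai pairing is degenerate, and the product of the two reflections is unipotent, so $\rho(\varphi^H)=1$. (The paper reaches the same conclusion by a slightly different route, deducing it from a general block-triangularity statement plus finiteness of the Coxeter group generated by the reflections on $H^2$, which also yields its second main theorem; your direct two-vector computation suffices for the counterexample itself.)

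The genuine gap is the lower bound $h_0(\phi)>0$, and you have in effect named it yourself: you assert that the $4$-dimensional transition spaces put $\langle E_1,E_2\rangle$ in a ``hyperbolic regime'' with exponential growth of $\sum_k\dim\Hom(E_i,\phi^nE_j[k])$, but you give no argument that this growth survives the cancellations in the long exact sequences of the twist triangles --- and since, as you observe, the induced actions on $K_0$ and on $HH_\bullet$ are quasi-unipotent, every Euler-characteristic--level count \emph{does} cancel down to polynomial growth, so the exponential growth can only be detected degree by degree and must be proved, not inferred from a transfer-matrix heuristic. The paper closes exactly this gap with an inductive computation: setting $C_n(p):=R\Hom(i_*\mathcal{O}_C,\varphi^n(\mathcal{M})\otimes\mathcal{P})$ for auxiliary line bundles of negative degree on $C$, it shows that the top nonvanishing cohomology of $C_n(p)$ sits in degree $n+2$ and satisfies
$\mathcal{H}^{n+2}(C_n(p))\simeq\mathcal{H}^{n+1}(C_{n-1}(l))\otimes H^1(C,\mathcal{O}_C(p-2))$,
the point being that the other term of the relevant triangle has cohomology only in degrees $\le n+1$, so the top degree climbs by one at each iteration and is structurally immune to cancellation; this yields $\dim\Ext^{n+2}(i_*\mathcal{O}_C,\mathcal{P}\otimes\varphi^n(\mathcal{M}))\ge(1-l)^{n-1}$, which is then fed into $\delta_0'$ of genuine split generators via the Koszul resolution of $\mathcal{O}_C$. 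Without some argument of this kind (or an appeal to a proved theorem computing the entropy of such compositions of twists), your proposal establishes only the easy half of the counterexample; your closed-form guess $\lambda=$ larger root of $x^2-(r^2-2)x+1$ is likewise unsupported, and the paper claims only a lower bound for $h_0$, not its exact value.
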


The lower bound $\log \rho(HH_\bullet(\phi)) \le h_0(\phi)$ is always true (see \cite{KST2018}). 

Conjecture \ref{KTConj} is known to be true when $X$ is a curve \cite{MR3600071}, an abelian surface \cite{2017arXiv170104009Y}, a variety with ample (anti)-canonical bundle \cite{MR3903564}. However, counterexamples have been found, first by Fan \cite{MR3826832} for strict Calabi-Yau manifold of dimension $\ge 3$, and then by Ouchi \cite{2017arXiv170501001O} for any $K3$ surface.

We will replace $HH_\bullet(\phi)$ by the induced Fourier-Mukai cohomological morphism $\phi^H : H^*(X,\matC) \to H^*(X,\matC)$ thanks to  the modified Hochschild-Konstant-Rosenberg isomorphism (\cite{MR2534995}, Theorem $1.2$).

%

In the case of surface we show the following, which is our first main result.

\begin{theorem}\label{maintheorem}
Let $S$ be a smooth projective surface and $C\subseteq S$ a $(-2)$-curve. Let $\mathcal{L}\in\Pic(S)$ be a line bundle satisfying $\deg_C(\mathcal{L}|_C)<0$ and consider the autoequivalence $\varphi = T_{\mathcal{O}_C}\circ (-\otimes \mathcal{L})$. Then we have
$$h_0(\varphi) > 0 = \log \rho(\varphi^H).$$
\end{theorem}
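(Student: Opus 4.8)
The two assertions are essentially independent.

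\emph{The equality $\log\rho(\varphi^H)=0$.} Since the cohomological Fourier--Mukai realization is compatible with composition, $\varphi^H=T_{\mathcal{O}_C}^H\circ(\,\cdot\,\ch(\mathcal{L}))$ on $H^*(S,\matC)$; tensoring by $\mathcal{L}$ acts by multiplication by $\ch(\mathcal{L})=e^{c_1(\mathcal{L})}$, which is unipotent. For the twist one has $T_{\mathcal{O}_C}^H(v)=v-\chi(\mathcal{O}_C,v)\,v(\mathcal{O}_C)$ with $\chi(\mathcal{O}_C,\mathcal{O}_C)=2$, so $T_{\mathcal{O}_C}^H$ is an involution, and $v(\mathcal{O}_C)$ has rank $0$ (in fact $v(\mathcal{O}_C)=(0,[C],1)$, the $\sqrt{\td S}$ corrections vanishing because $C\cdot K_S=2g-2-C^2=0$). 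Hence the rank is $\varphi^H$-invariant, the filtration $H^{\mathrm{even}}(S,\matC)\supseteq H^{\ge 2}(S,\matC)\supseteq H^4(S,\matC)$ is preserved, and the induced maps on the graded pieces are the identity on $H^0$ and $H^4$ and the reflection $c\mapsto c+(C\cdot c)[C]$ on $H^2$ (eigenvalues $\pm1$), while both factors act trivially on odd cohomology. Every eigenvalue of $\varphi^H$ is therefore a root of unity and $\rho(\varphi^H)=1$.

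\emph{The inequality $h_0(\varphi)>0$.} I will use the standard lower bound for categorical entropy (see Section~\ref{catentropy}): if $G$ split-generates $\D^b(S)$ then $h_0(\varphi)=\lim_n\frac1n\log\delta_0(G,\varphi^nG)$, and since any two objects are built from $G$ by a bounded number of cones and shifts, for all $E,F\in\D^b(S)$
$$h_0(\varphi)\ \ge\ \limsup_{n\to\infty}\frac1n\log\sum_{i\in\matZ}\dim_\matC\Hom_{\D^b(S)}(E,\varphi^n(F)[i]).$$
It then suffices to find one pair with exponentially growing right-hand side, and I take $E=F=\mathcal{O}_C$ (a point $\mathcal{O}_x$ with $x\in C$ works equally well). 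Both $T_{\mathcal{O}_C}$ and $-\otimes\mathcal{L}$ preserve the triangulated subcategory $\mathcal{D}_C\subseteq\D^b(S)$ of objects with cohomology supported on $C$, so the orbit $X_n:=\varphi^n(\mathcal{O}_C)$ stays in $\mathcal{D}_C$; using $(-\otimes\mathcal{L})\circ T_E\circ(-\otimes\mathcal{L}^{-1})=T_{E\otimes\mathcal{L}}$ one rewrites $\varphi^n=T_{\mathcal{O}_C}\circ T_{\mathcal{L}|_C}\circ\cdots\circ T_{\mathcal{L}^{\otimes(n-1)}|_C}\circ(-\otimes\mathcal{L}^{\otimes n})$, an iterated twist by the spherical objects $\mathcal{L}^{\otimes k}|_C=i_*\mathcal{O}_{\matP^1}(k\deg_C\mathcal{L}|_C)$. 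All the relevant $\Hom$'s can be computed in the formal (equivalently an analytic) neighbourhood of $C$, which is a neighbourhood of the zero section of $\mathrm{Tot}(\mathcal{O}_{\matP^1}(-2))$ --- the minimal resolution of the $A_1$ surface singularity --- and on $\matP^1$ they are given by the $\bigoplus_{p+q=\bullet}\Ext^p_C(-,-\otimes\wedge^q N_{C/S})$ spectral sequence with $N_{C/S}=\mathcal{O}_{\matP^1}(-2)$.

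The core step is to extract exponential growth from the triangles $\Hom^\bullet_S(\mathcal{O}_C,X_n\otimes\mathcal{L})\otimes\mathcal{O}_C\to X_n\otimes\mathcal{L}\to X_{n+1}$: applying $\Hom^\bullet_S(i_*\mathcal{O}_{\matP^1}(k),-)$ and using $\Hom^\bullet_S(i_*\mathcal{O}_{\matP^1}(k),X_n\otimes\mathcal{L})\cong\Hom^\bullet_S(i_*\mathcal{O}_{\matP^1}(k-\deg_C\mathcal{L}|_C),X_n)$ produces a linear recursion for the graded dimensions $\dim\Hom^j_S(i_*\mathcal{O}_{\matP^1}(k),X_n)$. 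Here the pertinent $\Hom$'s between shifted line bundles on $C$ are \emph{doubled} --- e.g. $\dim\Hom^\bullet_S(\mathcal{O}_C,\mathcal{L}|_C)=-2\deg_C\mathcal{L}|_C\ge2$ --- which is exactly the affine $\widetilde A_1$ feature (the McKay quiver of $\matZ/2<SL_2$ is the doubled Kronecker quiver), and one checks that the Perron--Frobenius eigenvalue of the recursion matrix is $>1$; hence $\sum_i\dim\Hom^i_S(\mathcal{O}_C,X_n)$ grows exponentially, so $h_0(\varphi)>0$, which together with the first part exhibits the failure of Conjecture~\ref{KTConj}.

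\emph{Where the difficulty lies.} The subtle point is that the alternating sums grow only polynomially: $\varphi$ acts with finite order on $K_0(\mathcal{D}_C)\cong\matZ^2$ --- one computes $\varphi_*[\mathcal{O}_x]=[\mathcal{O}_x]$ and $\varphi_*[\mathcal{O}_C]=-[\mathcal{O}_C]+(\deg_C\mathcal{L}|_C)[\mathcal{O}_x]$, so $\varphi_*^2=\mathrm{id}$ --- just as $\varphi^H$ was quasi-unipotent. The exponential growth therefore lives entirely inside the individual $\Ext^i$'s, with almost complete cancellation in the Euler characteristic, so the real work is to show that the connecting maps in the long exact sequences of the spherical-twist triangles cannot collapse the growth predicted by the recursion. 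I expect to handle this by controlling the cohomology sheaves of the intermediate objects $X_n\otimes\mathcal{L}^{\otimes k}$ --- showing they are sums of line bundles on $C$ lying in a restricted, predictable range of cohomological degrees, so that the relevant $\Hom^\bullet$'s are concentrated in a single degree and the recursion is genuinely non-negative --- or, alternatively, by transporting Bridgeland's description of $\mathrm{Stab}(\mathcal{D}_C)$ for the $A_1$ resolution and identifying $\varphi|_{\mathcal{D}_C}$ with a ``categorical pseudo-Anosov'' (an isometry of $\mathrm{Stab}(\mathcal{D}_C)$ translating along a geodesic) whose exponential mass growth forces the same conclusion. This cancellation control is the main obstacle; the rest is bookkeeping with the $\matP^1$-$\Ext$ spectral sequence.
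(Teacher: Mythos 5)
Your computation of $\rho(\varphi^H)=1$ is correct and is essentially the paper's argument specialised to this single autoequivalence: $\varphi^H$ preserves the filtration $H^{\mathrm{even}}\supseteq H^{\ge 2}\supseteq H^4$, acts as the identity on the graded pieces $H^0$, $H^4$ and $H^{\mathrm{odd}}$, and as the reflection $w\mapsto w+([C]\cdot w)[C]$ on $H^2$, so all eigenvalues are roots of unity (the paper runs the same block-triangularity argument in Proposition \ref{reduc2} and Lemma \ref{indepon2}, in the greater generality of Theorem \ref{theoremactioncoh}). The reduction of the entropy bound to exhibiting one pair $(E,F)$ with $\sum_i\dim\Ext^i(E,\varphi^nF)$ growing exponentially is also legitimate and matches what the paper does with the sequence $0\to\mathcal{O}_S(-C)\to\mathcal{O}_S\to i_*\mathcal{O}_C\to 0$ and Orlov's generators.

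However, for the inequality $h_0(\varphi)>0$ there is a genuine gap, and it is exactly the one you flag yourself: since $\varphi$ is quasi-unipotent on $K$-theory and on cohomology, the Euler characteristics carry no exponential growth, so everything hinges on showing that the long exact sequences coming from the twist triangles do not cancel the growth produced by the recursion. Your proposal states two possible ways you ``expect'' to handle this (controlling the cohomology sheaves of $X_n\otimes\mathcal{L}^{\otimes k}$, or invoking a stability-conditions argument on the local $A_1$ category), but neither is carried out, and this step is the entire content of the paper's Proposition \ref{recprop}. The paper's resolution is a degree-separation induction on the $\Hom$-complexes rather than on the objects: setting $C_n(p)=R\Hom(i_*\mathcal{O}_C,\varphi^n(\mathcal{M})\otimes\mathcal{P})$ with an auxiliary line bundle $\mathcal{P}$ of negative degree $p$ on $C$, one shows $\mathcal{H}^{j}(C_n(p))=0$ for $j>n+2$ and $\mathcal{H}^{n+2}(C_n(p))\neq 0$. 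The point is that in the triangle $A_n(p)\to B_n(p)\to C_n(p)$ the term $A_n(p)=C_{n-1}(l)\otimes D(p)$ has its top cohomology in degree $n+3$ (because $D(p)=R\Hom(i_*\mathcal{O}_C,i_*\mathcal{O}_C\otimes\mathcal{P})$ contributes a nonzero $\mathcal{H}^2$), while $B_n(p)\simeq C_{n-1}(l+p)$ vanishes above degree $n+1$; the long exact sequence then forces $\mathcal{H}^{n+2}(C_n(p))\simeq\mathcal{H}^{n+3}(A_n(p))$ with no connecting map available to kill it, yielding the lower bound $(1-l)^{n-1}$. Two further choices in the paper make this induction start cleanly and would need to be replicated in your setup: the seed object is a line bundle $\mathcal{M}$ on $S$ with $\deg_C(\mathcal{M}|_C)<0$ (so that the base case $R\Hom(i_*\mathcal{O}_C,\mathcal{M}\otimes\mathcal{L}\otimes\mathcal{P})$ is concentrated in a single degree, which fails for your choice $F=\mathcal{O}_C$, where the $\Hom$'s spread over two adjacent degrees), and the test object is twisted by the negative $\mathcal{P}$ precisely to create the two-degree gap. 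Until you supply an argument of this kind, the exponential growth of the individual $\Ext^i$'s, and hence $h_0(\varphi)>0$, is not established.
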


Note that, if the surface $S$ contains a $(-2)$-curve $C$, then $\mathcal{L}:=\mathcal{O}_S(C)$ fits the hypothesis. As a consequence, this gives a counterexample of Conjecture \ref{KTConj} in any birational class of surfaces. 

The inequality $h_0(\varphi) >0$ is proved in section \ref{minorationentropy} and the equality $\log \rho(\varphi^H) = 0$ is proved at the end of section \ref{actioncoh} (see Corollary \ref{corbaseautoequiv}).
\vspace{1\baselineskip}




Let $S$ be a smooth projective complex surface. In the second part of this paper, we inspect more generally the action on cohomology induced by autoequivalences of $\D^b(S)$.

In \cite{MR1689873}, Cantat shows that if $S$ admits an automorphism $f\in\Aut(S)$ of positive topological entropy, then $S$ is birational to either $(i)$ $\matP^2$, $(ii)$ a $K3$ surface, $(iii)$ a $2$-dimensional complex torus or $(iv)$ an Enriques surface. In the case $(i)$, $S$ is a blow up of $\matP^2$ at $10$ or more points.


We aim to find an analogue of this theorem relying the birational nature of the surface $S$ with the action on cohomology of its group of autoequivalences $\Aut(\D^b(S))$.

Recall that \textit{standard equivalences} are the equivalences lying in the subgroup
\begin{eqnarray*}
\Pic(S)\rtimes \Aut(S) \times \matZ\cdot [1] \subseteq \Aut(\D^b(S)).
\end{eqnarray*}
The classical result of Bondal and Orlov states that equality holds when $\pm K_S$ is ample. 

Define $B:=\langle T_{\mathcal{O}_C(a)} | \ C (-2)\text{-curve}, \ a\in\matZ \rangle$.
Our second main result is the following.

\begin{theorem}[= Theorem \ref{theoremactioncoh}]\label{theoremactioncohintro}
Let $S$ be a smooth surface for which its union $Z$ of $(-2)$-curves is a disjoint union of finite configurations of type $A$-$D$-$E$.
Let $\varphi\in  \langle B, \Pic(S) \rangle \rtimes \Aut(S)   \ \times \matZ\cdot [1]$ be an autoequivalence, so that, up to a shift, we have a decomposition
$$\varphi = b \circ (-\otimes\mathcal{L}) \circ f^*$$
with $b\in B, \mathcal{L}\in\Pic(S), f\in\Aut(S)$.

Then $$\rho(\varphi^H)=\rho(f^*).$$
\end{theorem}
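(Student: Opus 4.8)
The strategy is to understand the cohomological action $\varphi^H$ on $H^*(S,\matC)$ factor by factor and then compute the spectral radius of the composite. Since $\varphi = b\circ(-\otimes\mathcal{L})\circ f^*$ (up to shift, which acts trivially on cohomology by a sign $(-1)$ and hence does not affect the spectral radius), we have $\varphi^H = b^H\circ(-\otimes\mathcal{L})^H\circ (f^*)^H$. The first observation is that $H^*(S,\matC)=H^0\oplus H^1\oplus H^2\oplus H^3\oplus H^4$ decomposes and that each of the three operators preserves a natural filtration (roughly, the "codimension" filtration), so that on the associated graded the twist by $\mathcal{L}$ and the spherical twists in $B$ act \emph{unipotently}, while $f^*$ acts by the genuine pullback. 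Concretely, for $-\otimes\mathcal{L}$ the Fourier--Mukai action on an object of Chern character $v$ multiplies by $\ch(\mathcal{L})\cdot\sqrt{\td S}$, i.e. by $1+c_1(\mathcal{L})+\tfrac12 c_1(\mathcal{L})^2$, which is unipotent: it fixes $H^0$ and $H^1$ (trivially, as $c_1$ has no odd part), sends a class in $H^2$ to itself plus a multiple of the fundamental class in $H^4$, and fixes $H^4$. For a spherical twist $T_{\mathcal{O}_C(a)}$ along a $(-2)$-curve $C$, the action on $H^*$ is the reflection $v\mapsto v+\langle v, [\mathcal{O}_C(a)]\rangle[\mathcal{O}_C(a)]$ in the Mukai pairing (here the "$-2$" of the spherical object makes the sign work out); since $\mathcal{O}_C(a)$ is supported in codimension $\ge 1$, its Mukai vector lies in $H^{\ge2}$, so this reflection also fixes $H^0$, acts trivially on $H^1$, and is unipotent on $H^2\oplus H^4$ — indeed it is a reflection, hence has finite order, in the orthogonal group of the lattice spanned by the $(-2)$-curve classes, which by the $A$-$D$-$E$ hypothesis is a negative-definite root lattice with finite Weyl group.

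The key step is then: \emph{$b^H$ and $(-\otimes\mathcal{L})^H$ together generate a group whose closure (or at least whose relevant part) has bounded powers.} More precisely I would show that the subgroup of $\mathrm{GL}(H^*(S,\matC))$ generated by $B^H$ and the $(-\otimes\mathcal{L})^H$ preserves the finite-dimensional space $H^0\oplus H^2\oplus H^4$ (trivially $H^1, H^3$ are fixed pointwise up to sign) and acts there through a group that is, modulo the "Todd/unipotent" directions landing in $H^4$, a finite group (the Weyl group of the relevant root system acting on the span of the $(-2)$-classes, extended trivially on the orthogonal complement and on $H^0, H^4$). Hence $b^H\circ(-\otimes\mathcal{L})^H$ has the form $u\cdot w$ where $w$ has finite order and $u$ is unipotent with $u$-part only in the "raise codimension by one" direction; such an operator has all eigenvalues of modulus $1$, so $\rho\big(b^H\circ(-\otimes\mathcal{L})^H\big)=1$.

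Finally I would put the pieces together. Write $\varphi^H = A\circ (f^*)^H$ with $A := b^H\circ(-\otimes\mathcal{L})^H$ as above. The operator $(f^*)^H$ is the pullback by an automorphism and hence preserves the integral lattice $H^*(S,\matZ)$, the codimension filtration, and the cup product; on $H^2$ it is the classical $f^*$, and $\rho(f^*)$ is by definition (Gromov--Yomdin) the topological entropy exponential of $f$, realised on $H^{1,1}$. One inequality, $\rho(\varphi^H)\ge\rho((f^*)^H)$, I would get by noting that on the associated graded of the filtration, $A$ acts as a finite-order operator (a Weyl group element times identity on the extra pieces) while $f^*$ acts faithfully, so $\mathrm{gr}(\varphi^H)$ and $\mathrm{gr}((f^*)^H)$ have the same spectral radius, hence so do $\varphi^H$ and $(f^*)^H$ since the spectral radius of a filtered operator equals that of its associated graded. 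For the reverse I would argue the same way: passing to $\mathrm{gr}$ kills all the unipotent contributions of $A$ and the finite-order part only permutes a fixed finite set of eigenvalue-moduli, all equal to the corresponding moduli for $f^*$. \textbf{The main obstacle} is controlling the interaction on the nose: I must check that conjugating $A$ by powers of $(f^*)^H$ stays within a \emph{bounded} family (equivalently, that the group generated really does act with finite image on $\mathrm{gr}$), which uses crucially that $f$ permutes the finitely many $(-2)$-curves (so $f^*$ normalises $B^H$ up to finite ambiguity) and that the $A$-$D$-$E$ configuration has finite Weyl group; without the $A$-$D$-$E$ hypothesis the lattice need not be negative definite and this step fails. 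The rest is the routine but careful bookkeeping of Mukai vectors, the modified HKR isomorphism, and the filtration argument for spectral radii.
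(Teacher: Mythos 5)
Your overall strategy is the same as the paper's: block-triangularity of $\varphi^H$ with respect to the cohomological grading reduces everything to the $H^2$-block (Proposition \ref{reduc2}); on $H^2$ the twists $T_{\mathcal{O}_C(a)}$ act as the reflections $w\mapsto w+([C]\cdot w)[C]$ and hence generate a finite $A$-$D$-$E$ Weyl group, $(-\otimes\mathcal{L})^H$ is unipotent, and the normality of $B$ under $\Pic(S)\rtimes\Aut(S)$ is what lets one control powers of $\varphi$. So the ingredients you list are exactly the right ones.

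There is, however, one inference that is false as stated. You claim that since $A:=b^H\circ(-\otimes\mathcal{L})^H$ acts on the associated graded by an operator of finite order, $\mathrm{gr}(\varphi^H)=\mathrm{gr}(A)\circ\mathrm{gr}((f^*)^H)$ and $\mathrm{gr}((f^*)^H)$ have the same spectral radius, because the finite-order factor ``only permutes eigenvalue-moduli''. Composition with a finite-order operator does not preserve spectral radii: for $w=\left(\begin{smallmatrix}0&1\\1&0\end{smallmatrix}\right)$ and $g=\left(\begin{smallmatrix}2&0\\0&1/2\end{smallmatrix}\right)$ one has $\rho(wg)=1\neq 2=\rho(g)$. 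So the equality $\rho(\varphi^H)=\rho(f^*)$ cannot be read off from the single factorization $\varphi^H=A\circ(f^*)^H$; one must control \emph{all powers} $(\varphi^H)^{\circ m}$, which is precisely the issue you flag as the ``main obstacle'' but do not resolve. The paper closes it as follows: (i) replace $\varphi$ by a power so that $f$ fixes each $(-2)$-curve; (ii) use Lemma \ref{lemmatwistcommute} to write $\varphi^{\circ m}=b\circ b_2\circ\cdots\circ b_m\circ(-\otimes\mathcal{L}_m)\circ(f^*)^{\circ m}$, where each $b_j$ twists along the \emph{same} curves with possibly different degrees $a$; (iii) observe (Lemma \ref{indepon2}) that the $H^2$-action of $T_{\mathcal{O}_C(a)}$ is independent of $a$, so $(\varphi^H)_2^{\circ m}=(b^H)_2^{\circ m}\circ(f^*)_2^{\circ m}$ with $(b^H)_2$ a fixed element of the finite Weyl group (Proposition \ref{relationcox}), and choose $m$ with $(b^H)_2^{\circ m}=\Id$. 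Your alternative of proving that the relevant factors stay in a bounded family and applying Gelfand's formula would also work, but it requires exactly steps (i)--(iii); it is not automatic from finiteness of the Weyl group alone, since the degrees $a$ occurring in the $f^*$-conjugates of $b$ drift with $m$, and only the $a$-independence of the $H^2$-block makes the family finite.
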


In \cite{MR3896121}, Uehara proposes a trichotomy for surfaces in order to understand their groups of autoequivalences. In the case of a surface $S$ for which $K_S \not\equiv 0$ and $S$ admits no minimal elliptic fibration ($N_S=2$ in his notation), he conjectured the following description.

\begin{conjecture}[\cite{MR3896121}, Conjecture $1.2$]
Denote by $B_Z \subseteq \Aut(\D^b(S))$ the subgroup generated by spherical twists along objects supported on the union $Z$ of all $(-2)$-curves. Then
$$\Aut(\D^b(S)) = \langle B_Z, \Pic(S) \rangle \rtimes \Aut(S) \times \matZ[1].$$
\end{conjecture}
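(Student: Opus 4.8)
The plan is to take an arbitrary $\Phi\in\Aut(\D^b(S))$ and strip it down to the identity by successively composing with the proposed generators, with the whole difficulty concentrated over the $(-2)$-curves. By Orlov's representability theorem $\Phi$ is a Fourier--Mukai transform, so it induces an auto-isometry $\Phi^N$ of the numerical Grothendieck group $N(S)$ for the Euler pairing. I would first record that, under the standing hypothesis $N_S=2$ (that is, $K_S\not\equiv 0$ and $S$ carries no minimal elliptic fibration), the curves in $Z$ form finitely many disjoint $A$--$D$--$E$ configurations: these two conditions rule out respectively the Calabi--Yau type surfaces and the affine ($\widetilde{A}$--$\widetilde{D}$--$\widetilde{E}$) chains occurring in elliptic fibres. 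Consequently the sublattice $R\subseteq N(S)$ spanned by the $(-2)$-classes supported on $Z$ is a negative-definite root lattice of \emph{finite} type, and the twists in $B_Z$ act on $N(S)$ through its finite Weyl group $W(R)$, the twist $T_{\mathcal{O}_C(a)}$ acting as the reflection in the class $[\mathcal{O}_C(a)]$.

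The first reduction is numerical. Since $W(R)$ acts simply transitively on the Weyl chambers of $R$ and these reflections are realised inside $B_Z$, I may compose $\Phi$ with a suitable $b\in B_Z$ so as to assume $\Phi^N$ fixes $R$; composing further with a line-bundle twist, an automorphism and a shift, I may assume the resulting autoequivalence $\Psi$ acts trivially on the point class $[\mathcal{O}_x]$ and on $[\mathcal{O}_S]$. The second, and main, step promotes this to an object-level statement through the Serre functor $\mathbf{S}=(-\otimes\omega_S)[2]$: by the Bondal--Orlov criterion the point-like objects (those $P$ with $\mathbf{S}(P)\cong P[2]$, $\Hom^{<0}(P,P)=0$ and $\Hom(P,P)=\matC$) are permuted by $\Psi$. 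Away from $Z$, where $\omega_S$ is sufficiently nontrivial, the only point-like objects are the genuine skyscrapers $\mathcal{O}_x[m]$, so $\Psi$ induces a bijection $\sigma$ of $S\setminus Z$ with $\Psi(\mathcal{O}_x)\cong\mathcal{O}_{\sigma(x)}$.

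The delicate behaviour is over $Z$: there the point-like objects form a larger family, namely the orbit of the skyscrapers under the spherical-twist action of $B_Z$. Here I would localise to the minimal resolution of the corresponding $A$--$D$--$E$ surface singularity and invoke the classification of spherical objects supported on such a configuration, together with the faithfulness and orbit structure of the braid/Weyl-group action of the twist group. This local analysis should simultaneously (i) identify $B_Z$ with the group generated by the $T_{\mathcal{O}_C(a)}$, and (ii) show that, after the numerical reduction above, $\Psi$ also sends each skyscraper supported on $Z$ to a skyscraper. Combining the two regions, $\Psi$ carries every skyscraper to a skyscraper.

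The endgame is then the standard Bondal--Orlov argument: a Fourier--Mukai autoequivalence carrying all skyscrapers to skyscrapers is of the form $(-\otimes\mathcal{L})\circ f^{*}[n]$, so absorbing these factors shows $\Psi$, and hence $\Phi$, lies in $\langle B_Z,\Pic(S)\rangle\rtimes\Aut(S)\times\matZ[1]$. The product structure itself is routine bookkeeping: $\Aut(S)$ normalises $\langle B_Z,\Pic(S)\rangle$ through $f^{*}T_{\mathcal{O}_C(a)}(f^{*})^{-1}=T_{\mathcal{O}_{f^{-1}(C)}(a)}$ and $f^{*}(-\otimes\mathcal{L})(f^{*})^{-1}=-\otimes f^{*}\mathcal{L}$, while the shift is central, and $\Pic(S)$ normalises $B_Z$ because twisting a spherical object supported on $Z$ by a line bundle yields another such object. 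I expect the genuine obstacle to be the local analysis over the $(-2)$-curves: classifying all point-like objects supported on a finite $A$--$D$--$E$ configuration and pinning down the orbits of the twist group is precisely where the finite-type (as opposed to affine or indefinite) hypothesis is indispensable, since both the faithfulness and the orbit structure of the spherical-twist action can fail once the configuration leaves finite type.
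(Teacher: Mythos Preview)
The statement you are attempting to prove is presented in the paper as an \emph{open conjecture} (Uehara's Conjecture~1.2), not as a theorem; the paper contains no proof of it and there is therefore nothing to compare your argument against. The paper only records that the conjecture is known when $Z$ is a disjoint union of $A$-configurations (citing Uehara), and that even the equality $B_Z=B$---i.e.\ that every spherical twist along an object supported on $Z$ already lies in the group generated by the $T_{\mathcal{O}_C(a)}$---is cited as established only in that $A$-configuration case.

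As for the sketch itself, the parts you label ``local analysis'' are exactly the open content of the conjecture, and you have not supplied them. First, classifying all spherical (hence all point-like) objects supported on an arbitrary $D$- or $E$-configuration, and showing they form a single orbit under $B=\langle T_{\mathcal{O}_C(a)}\rangle$, is not available in the literature; your claimed identification $B_Z=B$ in step~(i) rests on this. Second, the numerical reduction does not force $\Psi$ to take skyscrapers on $Z$ to skyscrapers: triviality of $\Psi^N$ on the root lattice $R$ controls classes, not objects, and the sentence ``after the numerical reduction above, $\Psi$ also sends each skyscraper supported on $Z$ to a skyscraper'' is precisely the missing step---it is where Uehara's argument in the $A_n$ case does real work (via Ishii--Uehara's description of spherical objects on $A_n$-resolutions), and no analogue is known for $D$ or $E$. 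Finally, your assertion that $N_S=2$ forces the $(-2)$-curves to form finitely many disjoint finite-type configurations needs justification beyond ``no elliptic fibration rules out affine chains''. In short, your outline correctly recovers the standard Bondal--Orlov reduction away from $Z$, but over $Z$ it defers the entire difficulty to unproved local statements; this is a proof strategy, not a proof.
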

The conjecture is proved (\cite{MR3896121}, Theorem $6.8$) when $Z$ is a disjoint union of $A$-configurations. In this case, $B_Z=B$ (\cite{MR2198807}, Corollary $6.10$).
Using this, we obtain a first Cantat-type corollary of Theorem \ref{theoremactioncohintro}.

\begin{corollary}
Let $S$ be a smooth surface with finitely many $(-2)$-curve in disjoint $A$-configurations. Assume $K_S\not\equiv 0$ and that $S$ admits no minimal elliptic fibration. Then, if there is an autoequivalence $\varphi\in \Aut(\D^b(S))$ with $\rho(\varphi^H)>1$, $S$ is rational. 
\end{corollary}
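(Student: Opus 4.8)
The plan is to combine Theorem~\ref{theoremactioncohintro} (describing the cohomological spectral radius of any autoequivalence in the relevant subgroup as the spectral radius of the underlying automorphism) with Uehara's structural result that, under the stated hypotheses, the full group $\Aut(\D^b(S))$ \emph{is} that subgroup, and then feed the output into Cantat's classification. So the argument is essentially a concatenation of three black boxes, and the work lies in checking that their hypotheses match up.

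First I would record that, by the hypothesis ``finitely many $(-2)$-curves in disjoint $A$-configurations'', the union $Z$ of all $(-2)$-curves is a disjoint union of finite configurations of type $A$, in particular of $A$-$D$-$E$ type, so Theorem~\ref{theoremactioncohintro} applies verbatim to any $\varphi$ in $\langle B,\Pic(S)\rangle\rtimes\Aut(S)\times\matZ[1]$. Next, since $K_S\not\equiv 0$ and $S$ admits no minimal elliptic fibration, we are in Uehara's case $N_S=2$, and since $Z$ is a disjoint union of $A$-configurations, his Theorem~$6.8$ gives $\Aut(\D^b(S)) = \langle B_Z,\Pic(S)\rangle\rtimes\Aut(S)\times\matZ[1]$, while \cite{MR2198807}, Corollary~$6.10$ identifies $B_Z$ with $B$. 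Hence \emph{every} autoequivalence $\varphi\in\Aut(\D^b(S))$ admits a decomposition $\varphi = b\circ(-\otimes\mathcal{L})\circ f^*$ up to shift, and Theorem~\ref{theoremactioncohintro} yields $\rho(\varphi^H) = \rho(f^*)$.

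Now suppose there is $\varphi\in\Aut(\D^b(S))$ with $\rho(\varphi^H)>1$. By the above, $\rho(f^*)>1$ for the associated $f\in\Aut(S)$; since the topological entropy of an automorphism of a compact Kähler surface equals $\log\rho(f^*)$ (Gromov--Yomdin, applied to the action on $H^*(S,\matC)$), the automorphism $f$ has positive topological entropy. By Cantat's theorem \cite{MR1689873}, $S$ is then birational to $\matP^2$, a $K3$ surface, a $2$-torus, or an Enriques surface. The last three all have $K_S\equiv 0$ up to a finite \'etale cover — more precisely a $K3$ or torus has trivial canonical class and an Enriques surface has $2K_S\equiv 0$, hence $K_S\equiv 0$ numerically — contradicting $K_S\not\equiv 0$; alternatively, one observes that a surface birational to one of these carrying a positive-entropy automorphism is itself (after contracting) of that type, and in any case its Kodaira dimension is $0$ or $-\infty$ with $S$ not rational forces the elliptic or Calabi--Yau cases excluded by hypothesis. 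Therefore $S$ must be birational to $\matP^2$, i.e.\ rational.

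The main obstacle is the bookkeeping in the previous paragraph: one must be careful that ``$S$ birational to a $K3$/torus/Enriques surface and $K_S\not\equiv 0$'' is genuinely impossible, and that ``no minimal elliptic fibration'' is not secretly needed to rule out a case. In fact the cleanest route is: Cantat's four cases are mutually exclusive up to birational equivalence, and in cases (ii)--(iv) the (unique) minimal model has Kodaira dimension $0$; a surface with $K_S\not\equiv 0$ that is birational to a minimal surface of Kodaira dimension $0$ would still have non-negative Kodaira dimension, and blowing down cannot make $K$ numerically trivial unless it already was, so the minimal model of $S$ has $\kappa=0$ and $K\equiv 0$, whence $\kappa(S)=0$ — but then $S$ either is that minimal model (so $K_S\equiv 0$, contradiction) or its minimal model is a $K3$/Enriques/torus, all of which, after the blow-ups needed to support a positive entropy automorphism, still have $K$ effective and non-rational; the only remaining possibility compatible with $\rho(\varphi^H)>1$ and $K_S\not\equiv 0$ is case (i), $S$ rational. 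I would present this as a short case analysis, invoking the elliptic-fibration hypothesis only if it turns out to be needed to separate the Enriques case, and otherwise noting that $K_S\not\equiv 0$ alone suffices.
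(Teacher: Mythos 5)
Your overall route is the one the paper intends: Uehara's Theorem $6.8$ together with \cite{MR2198807}, Corollary $6.10$, identifies $\Aut(\D^b(S))$ with the group of Theorem \ref{theoremactioncohintro}; that theorem gives $\rho(\varphi^H)=\rho(f^*)$; Gromov--Yomdin converts $\rho(f^*)>1$ into positive topological entropy of $f$; and Cantat's classification \cite{MR1689873} then restricts the birational type of $S$. Those steps are assembled correctly, the hypotheses are matched up properly, and this is exactly the concatenation the paper has in mind (it gives no further argument).

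The problem is your final step, the exclusion of Cantat's cases (ii)--(iv), and it is a genuine gap rather than bookkeeping. Being birational to a K3, torus or Enriques surface only forces the \emph{minimal model} $S_0$ of $S$ to satisfy $K_{S_0}\equiv 0$; it does not force $K_S\equiv 0$, since $K_S=\pi^*K_{S_0}+\sum a_iE_i\not\equiv 0$ whenever $S\neq S_0$. Your first argument (``hence $K_S\equiv 0$ numerically, contradicting the hypothesis'') silently assumes $S$ is minimal. Your fallback (``a surface birational to one of these carrying a positive-entropy automorphism is itself, after contracting, of that type'') is exactly the assertion that needs proof, and it is not true as a general statement: if $g$ is a positive-entropy automorphism of a K3 surface, or of a simple abelian surface with real multiplication, fixing a point $p$, then $g$ lifts to a positive-entropy automorphism of the blow-up at $p$, which has $K\not\equiv 0$ and is not rational; one then has to check whether the remaining hypotheses (finitely many $(-2)$-curves in disjoint $A$-configurations, no minimal elliptic fibration) rule such surfaces out, and your case analysis never does this --- indeed for a blow-up of a simple abelian surface there are no $(-2)$-curves and no elliptic fibrations at all. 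The claims ``blowing down cannot make $K$ numerically trivial unless it already was'' and ``still have $K$ effective and non-rational'' do not produce the needed contradiction. You correctly flag this worry yourself, but the hedged discussion in your last paragraph does not resolve it, so as written the proof does not close; you would need either to add minimality of $S$ in the $\kappa(S)=0$ case as a hypothesis, or to supply an argument that the stated hypotheses exclude non-minimal surfaces of Kodaira dimension zero.
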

\vspace{1\baselineskip}

\subsection*{Notations}

For a smooth projective variety $X$, we denote $\D^b(X)$ the bounded derived category of coherent sheaves of $X$. 

All the functors we consider are derived but (often) written with their classical notation (e.g. $\otimes$ instead of $\otimes^L$). Recall that closed immersions have no higher direct image, and that the tensor product and the pull back need not to be derived when applied to locally free sheaves.

For any complex $F\in\D^b(X)$, $j\in\matZ$, we write $\mathcal{H}^j(F)$ the $j$-th cohomology sheaf of the complex. For $F,G\in\D^b(X)$, we denote $\Ext^j_X(F,G) := \Hom_{\D^b(X)}(F,G[i])$ the higher Hom groups.
\newline

%

\section{Preliminaries}

\subsection{Categorical entropy}\label{catentropy}

Let $K$ be a field and $\mathcal{T}$ be a $K$-linear triangulated category of finite type. The definitions and results exposed here come from \cite{MR3289326}, but are also explained in Haiden's lectures notes \cite{HaidenLectNote}, section $9$.

Let $A,B\in\mathcal{T}$ be non-zero objects. If $B\in \langle A \rangle$, where $\langle - \rangle$ denotes the split closure, we can construct a tower of triangles
\begin{equation*}
\xymatrix{
 0 \ar[rr] & & B_1 \ar[dl] \ar[rr] & & B_2 \ar[dl] & \cdots &  B_{k-1} \ar[rr] & & B\oplus B^{'} \ar[dl]\\
 & A[n_1] \ar@{-->}[ul] &  & A[n_2] \ar@{-->}[ul] & & \cdots & & A[n_k] \ar@{-->}[ul]  &
}
\end{equation*}
for some $B' \in \mathcal{T}$, with $k\ge 0$ and $n_i\in\matZ$.

\begin{definition}
We define the \textit{complexity} of $A$ relative to $B$ as the following function: for all $t\in\matR$,
$$\delta_t(A,B):= \inf \left\{ \sum_{j=1}^k e^{n_jt}\right\}\in \matR \cup \{+\infty\}$$
where the infimum is taken over all possible towers.

Note that $\delta_t(A,B)=+\infty$ for all $t$ if and only if $B \notin \langle A \rangle$.
\end{definition}


\begin{proposition}[\cite{MR3289326}, Prop. $2.2$]
For any non-trivial $A,B,C\in \mathcal{T}$ we have the following:
\begin{itemize}
\item $\delta_t(A,B)$ depends on $A$ and $B$ only up to isomorphisms,
\item $\delta_t(A,C)\le \delta_t(A,B)\delta_t(B,C)$,
\item If $\mathcal{T}^{'}$ is a triangulated category of finite type and $F : \mathcal{T}\to \mathcal{T}^{'} $ is an exact functor, then $\delta_t(FA,FB)\le \delta_t(A,B)$.
\end{itemize}
\end{proposition}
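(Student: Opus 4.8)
The statement collects three properties of the complexity $\delta_t$, and I would treat them in increasing order of difficulty. The first and third points are essentially formal. For isomorphism invariance, I would observe that a tower for $(A,B)$ is a finite sequence of distinguished triangles $B_{i-1}\to B_i\to A[n_i]\to B_{i-1}[1]$ with $B_0=0$ and $B_k=B\oplus B'$; replacing $A$ by an isomorphic object $\tilde A$ (resp. $B$ by $\tilde B$) and transporting the triangles along the isomorphisms, using the axiom that a distinguished triangle remains distinguished under isomorphism of its objects, yields a tower for $(\tilde A,\tilde B)$ with the \emph{same} exponents $n_i$. Hence the two sets of achievable sums $\sum_j e^{n_j t}$ coincide and the infima agree. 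For functoriality, I would use that an exact functor $F$ is additive, so $F0=0$ and $F$ commutes with finite direct sums, and that it sends distinguished triangles to distinguished triangles and commutes with the shift; applying $F$ to a tower for $(A,B)$ produces a tower for $(FA,FB)$ with cones $(FA)[n_i]$ and the same exponents, so $\delta_t(FA,FB)\le\sum_j e^{n_j t}$ for each tower, and passing to the infimum gives the claim.

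The submultiplicativity $\delta_t(A,C)\le\delta_t(A,B)\,\delta_t(B,C)$ is the substantial point, and I would reduce it to three auxiliary facts. \emph{(i) Shift:} shifting an entire tower for $B$ by $m$ gives a tower for $B[m]$ with exponents $n_i+m$, so $\delta_t(A,B[m])=e^{mt}\delta_t(A,B)$. \emph{(ii) Monotonicity under summands:} a tower building $C\oplus C'$ is, by the very definition of $\delta_t$, also a tower for $C$, so $\delta_t(A,C)\le\delta_t(A,C\oplus C')$. \emph{(iii) Subadditivity in the second variable:} for any distinguished triangle $X\to Y\to Z\to X[1]$ one has $\delta_t(A,Y)\le\delta_t(A,X)+\delta_t(A,Z)$.

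Granting $(iii)$, I would conclude as follows: pick a tower for $(B,C)$ with cones $B[m_i]$ and total object $C\oplus C'$. Applying $(iii)$ inductively along this filtration together with $(i)$ gives $\delta_t(A,C\oplus C')\le\sum_i\delta_t(A,B[m_i])=\bigl(\sum_i e^{m_i t}\bigr)\delta_t(A,B)$; combining with $(ii)$ and taking the infimum over all towers for $(B,C)$ yields $\delta_t(A,C)\le\delta_t(B,C)\,\delta_t(A,B)$.

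The main obstacle is therefore $(iii)$. Here I would first absorb the auxiliary summands $X',Z'$ coming from the two towers (building $X\oplus X'$ and $Z\oplus Z'$ from shifts of $A$): from the given triangle, form the distinguished triangle $X\oplus X'\to Y\oplus X'\oplus Z'\to Z\oplus Z'\to(X\oplus X')[1]$, obtained as the direct sum of the original triangle with the split triangles $X'\xrightarrow{\Id}X'\to 0$ and $0\to Z'\xrightarrow{\Id}Z'$. I would then concatenate the tower for $X$ (which reaches $X\oplus X'$) with a filtration of $Y\oplus X'\oplus Z'$ that \emph{starts} at $X\oplus X'$ and has as cones the pieces $A[q_j]$ of the tower for $Z$. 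Producing this second half is the crux and is precisely a filtration-lifting statement: given a triangle $P\to Q\to R\to P[1]$ and a tower $0=R_0\to\cdots\to R_b=R$ with cones $A[q_j]$, there is a tower $P=Q_0\to\cdots\to Q_b=Q$ with the same cones, obtained by pulling the filtration of $R$ back along $Q\to R$ one step at a time via the octahedral axiom. The resulting combined filtration has total object $Y\oplus(\text{junk})$ and cones $\{A[p_i]\}\cup\{A[q_j]\}$, which gives $(iii)$ after taking infima. The only real care needed is the routine but slightly fiddly octahedral bookkeeping in this lifting step.
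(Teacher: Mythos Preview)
The paper does not supply a proof of this proposition; it is quoted from \cite{MR3289326} and used as a black box. Your argument is correct and is essentially the standard one given in that reference: the first and third items are indeed formal, and for submultiplicativity your reduction to the subadditivity statement (iii) together with the octahedral filtration-lifting is exactly the right mechanism. The only point worth tightening is the lifting step: when you define $Q_j$ by the triangle $P\to Q_j\to R_j\to P[1]$ and produce the map $Q_{j-1}\to Q_j$ via TR3, you should note that the octahedral axiom is applied to the composable pair $P\to Q_{j-1}\to Q_j$, whose cones are $R_{j-1}$ and $R_j$, so the cone of $Q_{j-1}\to Q_j$ sits in a triangle with $R_{j-1}\to R_j$ and hence is $A[q_j]$. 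With that made explicit the proof is complete.
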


\begin{definition}
Let $G$ be a split generator of $\mathcal{T}$ and $\phi : \mathcal{T} \to \mathcal{T}$ an exact endofunctor such that $\phi^n\neq0$ for all $n\ge0$. The \textit{categorical entropy} of $\phi$ is defined to be the function
$$h_t(\phi) := \lim_{n \to \infty} \frac{1}{n} \log \delta_t(G,\phi^n G).$$
\end{definition}

The following lemma is very useful.

\begin{lemma}[\cite{MR3289326}, Lemma $2.5$]\label{lemmaentropy}
The limit \ $\lim_{n \to \infty} \frac{1}{n} \log \delta_t(G,\phi^n G)$ exists in $[-\infty,+\infty)$ for every $t\in\matR$ and is independant of the choice of the split-generator $G$.
Moreover, if $G^{'}$ is another split-generator, then
$$h_t(\phi) = \lim_{n \to \infty} \frac{1}{n} \log \delta_t(G,\phi^n G^{'}).$$
\end{lemma}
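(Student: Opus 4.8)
The plan is to deduce the existence of the limit from Fekete's subadditivity lemma, once the sequence $a_n := \log \delta_t(G, \phi^n G)$ is shown to be subadditive, and then to extract the independence of the split-generator by comparing the sequences attached to $G$ and to $G'$ through the submultiplicativity and functoriality of $\delta_t$ recorded in the previous proposition. First I would fix $t \in \matR$ and observe that, since $G$ is a split-generator, $\langle G \rangle = \mathcal{T}$, so $\phi^n G \in \langle G \rangle$ and thus $a_n < +\infty$; the hypothesis $\phi^n \neq 0$ forces $\phi^n G \neq 0$, so each $a_n$ lies in $[-\infty,+\infty)$. To obtain subadditivity, I would combine submultiplicativity with the functoriality of the exact endofunctor $\phi^m$, writing $\phi^{m+n} G = \phi^m(\phi^n G)$:
\begin{equation*}
\delta_t(G, \phi^{m+n} G) \le \delta_t(G, \phi^m G)\,\delta_t\bigl(\phi^m G, \phi^m(\phi^n G)\bigr) \le \delta_t(G, \phi^m G)\,\delta_t(G, \phi^n G),
\end{equation*}
which upon taking logarithms reads $a_{m+n} \le a_m + a_n$. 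Fekete's lemma for subadditive sequences valued in $[-\infty,+\infty)$ then gives that $\tfrac1n a_n$ converges, with $\lim_{n\to\infty}\tfrac1n a_n = \inf_{n\ge1}\tfrac1n a_n \in [-\infty,+\infty)$, establishing existence of the limit defining $h_t(\phi)$.

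For independence of the split-generator, let $G'$ be a second split-generator. Since $\langle G\rangle = \langle G'\rangle = \mathcal{T}$, the mixed complexities $\delta_t(G,G')$ and $\delta_t(G',G)$ are finite and, being complexities of nonzero objects, strictly positive, so their logarithms are finite $n$-independent constants. Applying submultiplicativity twice together with the functoriality of $\phi^n$ yields
\begin{equation*}
\delta_t(G', \phi^n G') \le \delta_t(G', G)\,\delta_t(G, \phi^n G') \le \delta_t(G', G)\,\delta_t(G, G')\,\delta_t(G, \phi^n G).
\end{equation*}
Dividing the logarithm by $n$ and letting $n\to\infty$, the two constant terms vanish, giving $\lim \tfrac1n\log\delta_t(G', \phi^n G') \le \lim \tfrac1n\log\delta_t(G, \phi^n G)$; exchanging the roles of $G$ and $G'$ provides the reverse inequality, so the limit does not depend on the chosen split-generator.

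Finally, for the mixed statement I would bound, through the same two inputs,
\begin{equation*}
\delta_t(G, \phi^n G') \le \delta_t(G, \phi^n G)\,\delta_t(G, G'), \qquad \delta_t(G, \phi^n G) \le \delta_t(G, \phi^n G')\,\delta_t(G', G),
\end{equation*}
so that, after dividing by $n$ and passing to the limit, both $\limsup$ and $\liminf$ of $\tfrac1n\log\delta_t(G, \phi^n G')$ are squeezed against the (now generator-independent) value $h_t(\phi)$, forcing convergence to $h_t(\phi)$ as claimed. The argument is structurally routine given the three properties of $\delta_t$; the one point deserving genuine care is the bookkeeping that guarantees every correction factor $\delta_t(G,G')$, $\delta_t(G',G)$ is truly $n$-independent, finite, and strictly positive — precisely what split-generation supplies — so that each contributes $o(n)$ and disappears in every Cesàro-type limit.
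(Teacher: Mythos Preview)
Your argument is correct and is precisely the standard proof: subadditivity of $n\mapsto\log\delta_t(G,\phi^nG)$ via submultiplicativity and functoriality of $\phi^m$, Fekete for existence, and sandwich estimates with the finite constants $\delta_t(G,G')$, $\delta_t(G',G)$ for independence and the mixed formula. The present paper does not supply its own proof of this lemma---it is quoted from \cite{MR3289326}, Lemma~2.5---and your write-up reproduces that argument faithfully.
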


In the case of the derived category of a smooth projective variety $X$, the entropy can be computed as Poincar\'e polynomials in $\Ext$ groups.

\begin{proposition}[\cite{MR3289326}, Thm. $2.6$]
For any autoequivalence $\phi : \D^b(X) \to \D^b(X)$ and for any split-generators $G,G^{'}$ we have
$$h_t(\phi) = \lim_{n \to \infty} \frac{1}{n} \log \left(\sum_{j\in\matZ} \dim \Ext^j(G,\phi^nG')e^{jt}\right).$$
\end{proposition}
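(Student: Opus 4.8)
The plan is to deduce this from a two-sided comparison, uniform in the second argument, between the complexity $\delta_t(G,-)$ and the Poincar\'e polynomial $P_t(G,E):=\sum_{j\in\matZ}\dim\Ext^j(G,E)\,e^{jt}$. Concretely, I would first prove that for a fixed split-generator $G$ there are constants $0<c_1(t)\le c_2(t)$, depending on $G$ and $t$ only, with
\[
c_1(t)\,P_t(G,E)\ \le\ \delta_t(G,E)\ \le\ c_2(t)\,P_t(G,E)\qquad\text{for every nonzero }E\in\D^b(X).
\]
Granting this, apply it to $E=\phi^nG'$: properness of $X$ makes all $\Ext$-groups finite dimensional, so $P_t(G,\phi^nG')<\infty$, while $G$ being a split-generator and $\phi^nG'\neq0$ make it nonzero; taking $\tfrac1n\log$, the terms $\tfrac1n\log c_i(t)$ tend to $0$, and Lemma~\ref{lemmaentropy} identifies $\lim_n\tfrac1n\log\delta_t(G,\phi^nG')$ with $h_t(\phi)$ even when $G'\neq G$. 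This is the asserted formula.

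The lower bound is formal. Fix a tower almost realizing $\delta_t(G,E)$, with successive cones $G[n_1],\dots,G[n_k]$ and last term $B_k=E\oplus E'$. Applying the cohomological functor $\Hom(G,-)$ to the triangles $B_{i-1}\to B_i\to G[n_i]\to B_{i-1}[1]$ and using subadditivity of dimensions along the resulting long exact sequences gives $\dim\Ext^j(G,E)\le\dim\Ext^j(G,B_k)\le\sum_{i=1}^k\dim\Ext^{\,j+n_i}(G,G)$ for all $j$; multiplying by $e^{jt}$, summing over $j$, re-indexing, and passing to the infimum over towers yields $P_t(G,E)\le P_t(G,G)\cdot\delta_t(G,E)$, that is, the claim with $c_1(t)=P_t(G,G)^{-1}$. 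Only the fact that $\Hom(G,-)$ is cohomological is used, so this half is valid in any triangulated category of finite type.

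The upper bound is where the geometry of $X$, and not merely finite type, must enter, and I expect it to be the main obstacle. Put $A:=\mathrm{RHom}(G,G)$, so that $\D^b(X)\simeq\mathrm{Perf}(A)$ with $G\mapsto A$ and $\Ext^j(G,E)\cong H^j(M)$ when $E\leftrightarrow M$. Since $X$ is proper, $H^\bullet(A)$ is finite dimensional, so one may pass to the minimal $A_\infty$-model $B=H^\bullet(A)$, a finite-dimensional $A_\infty$-algebra; since $X$ is smooth, $\D^b(X)$ carries a Serre functor and $B$ is smooth, so every object corresponds to a perfect $B$-module. The crux is then a quantitative resolution statement: a perfect $B$-module $M$ can be assembled from finitely many shifts $B[m]$ by iterated cones and one retraction in such a way that the number of shifts used, weighted by $e^{mt}$, is at most $C\cdot\sum_j\dim H^j(M)\,e^{jt}$ with $C$ depending only on the finite-dimensional algebra $B$ and on $t$ — one works with the minimal model of $M$, whose underlying graded space is $H^\bullet(M)$, and builds it degree by degree, finiteness of $B$ keeping the implied constant uniform in $M$. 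Transporting the resulting tower back to $E$ and absorbing the bounded numbers $\dim\Ext^\bullet(G,G)$ into $c_2(t)$ gives the inequality. Combining the two bounds gives the uniform comparison, hence the proposition; the entire weight of the proof lies in this last quantitative resolution, which is exactly what can fail for a general triangulated category of finite type and forces one to use that $\D^b(X)$ is smooth and proper — equivalently, saturated.
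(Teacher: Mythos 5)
First, note that the paper offers no proof of this statement: it is quoted from \cite{MR3289326}, Thm.~2.6, so the comparison below is with the original argument rather than with anything in this text. Your overall strategy coincides with it: a two-sided comparison $c_1(t)P_t(G,E)\le\delta_t(G,E)\le c_2(t)P_t(G,E)$ uniform in $E$, combined with Lemma~\ref{lemmaentropy} to allow $G'\neq G$. Your lower bound is correct and is the standard one (apply the cohomological functor $\Hom(G,-)$ to a tower and use subadditivity of dimensions along the long exact sequences); the only point to watch is that the re-indexing produces $\sum_i e^{-n_it}$ rather than $\sum_i e^{n_it}$, a sign/convention issue that is invisible at $t=0$ but should be reconciled with the definition of $\delta_t$.

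The genuine gap is in the upper bound, exactly where you say the whole weight of the proof lies. The assertion that a perfect module $M$ over the minimal model $B=H^\bullet(\mathrm{RHom}(G,G))$ can be built ``degree by degree'' from shifts of the free module $B$ with weighted count at most $C\cdot\sum_j\dim H^j(M)e^{jt}$ is essentially a restatement of the inequality to be proved, and the mechanism you indicate does not deliver it: (i) the filtration of a minimal $A_\infty$-module by cohomological degree is not in general a filtration by $A_\infty$-submodules, since the higher structure maps $m_k$ have degree $2-k<0$; (ii) even granting such a filtration, its graded pieces are shifted semisimple modules, not free ones, so one must still pay to build each simple out of $B$, and bounding that cost is again the perfectness one is trying to quantify. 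The argument of \cite{MR3289326} avoids both issues by resolving the diagonal: smoothness means the diagonal bimodule $B$ lies in $\langle B\otimes B^{\mathrm{op}}\rangle$, hence admits \emph{one fixed} finite tower with cone terms shifts of $B\otimes B^{\mathrm{op}}$; applying $-\otimes_B M$ yields a tower for $M$ whose cone terms are shifts of $B\otimes_k M\simeq\bigoplus_j B[-j]^{\oplus\dim H^j(M)}$ (properness plus minimality of $M$), giving $\delta_t(G,E)\le C_2(t)\sum_j\dim\Ext^j(G,E)e^{jt}$ with $C_2(t)$ the weighted length of the diagonal's tower. Substituting this convolution argument for your degree-by-degree construction closes the gap; the surrounding reductions (finiteness of $P_t$ by properness, nonvanishing because $G$ generates, the $\tfrac1n\log$ limit killing the constants) are fine. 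A last nit: ``$X$ smooth, hence a Serre functor, hence $B$ smooth'' is not a valid chain of implications; what is used is homological smoothness, i.e.\ perfectness of the diagonal, which follows from smoothness of $X$ directly.
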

\vspace{0.5\baselineskip}


\subsection{Spherical twists}

Let $X$ be a smooth projective variety over $\matC$. Spherical twists, introduced by Seidel and Thomas \cite{MR1831820}, are a really important example of autoequivalence of $\D^b(X)$.

\begin{definition}
An object $\mathcal{E}\in\D^b(X)$ is called \textit{spherical} if $\mathcal{E}\otimes\omega_X\simeq\mathcal{E}$ and $R\Hom(\mathcal{E},\mathcal{E}) \simeq \matC \oplus \matC[-\dim X]$.
\end{definition}

\begin{example}
Let $X=S$ be a surface, and $C \xhookrightarrow{i} S$ a $(-2)$-curve, that is, a curve $C\simeq\matP^1$ with self-intersection $-2$. Then any line bundle $\mathcal{O}_C(a)$, $a\in\matZ$, is a spherical object. Indeed, $K_S\cdot C=0$ by adjunction formula and thus $i_*(\mathcal{O}_C(a))\otimes\omega_S \simeq i_*(\mathcal{O}_C(a)\otimes i^*\omega_S) \simeq i_*\mathcal{O}_C(a)$ by projection formula. Now, using \cite{huybrechts2006fourier}, section $11$:
\begin{eqnarray*}
R\Hom(i_*\mathcal{O}_C(a),i_*\mathcal{O}_C(a)) &=& 
R\Hom(i^*i_*\mathcal{O}_C(a),\mathcal{O}_C(a))\\
&=&
R\Hom(\mathcal{O}_C(a)\oplus \mathcal{O}_C(a+2)[1],\mathcal{O}_C(a))\\
&=& 
R\Hom(\mathcal{O}_C,\mathcal{O}_C\oplus \mathcal{O}_C(-2)[1]),
\end{eqnarray*}

and the result comes from direct computation of these $\Ext$-groups since $C\simeq \matP^1$.
\end{example}

\begin{definition}
The \textit{spherical twist} $T_\mathcal{E}$ with respect to a spherical object $\mathcal{E}\in\D^b(X)$ is the integral functor with kernel 
$$\text{Cone}(\eta : q^*\mathcal{E}^\vee \otimes p^*\mathcal{E} \to \mathcal{O}_{\Delta_X})$$
 where $p,q$ are the natural projections $X\times X \to X$ and $\eta$ is the natural pairing. On the level of objects it is given by
$$A \mapsto T_\mathcal{E}(A) := \text{Cone}(R\Hom(\mathcal{E},A)\otimes\mathcal{E} \xrightarrow{\text{ev}} A),$$
i.e. $T_\mathcal{E}(A)$ is given by the cone of the natural evaluation map.
\end{definition}

\begin{proposition}\cite{MR1831820}
The spherical twist $T_\mathcal{E}$ is an equivalence.
\end{proposition}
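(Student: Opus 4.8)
The target is to prove that the spherical twist $T_\mathcal{E}$ associated to a spherical object $\mathcal{E}$ is an equivalence of $\D^b(X)$.

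The plan is to realize $T_\mathcal{E}$ as a Fourier--Mukai transform, construct an explicit candidate inverse at the level of kernels, and verify invertibility by convolving kernels and invoking the two defining properties of a spherical object. Working with kernels (rather than assuming $X$ is Calabi--Yau) has the advantage that the canonical bundle is tracked explicitly through adjunction, which is essential here since $\mathcal{E}\otimes\omega_X\simeq\mathcal{E}$ is a condition on $\mathcal{E}$ and not on $X$.

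First I would write $T_\mathcal{E}=\Phi_P$ with kernel $P=\text{Cone}(\eta:q^*\mathcal{E}^\vee\otimes p^*\mathcal{E}\to\mathcal{O}_\Delta)$, keeping in mind the functorial triangle $R\Hom(\mathcal{E},-)\otimes\mathcal{E}\xrightarrow{\mathrm{ev}}\mathrm{id}\to T_\mathcal{E}$. Since $\Phi_P$ is integral, its right adjoint is again integral, $\Phi_P^R=\Phi_{P_R}$, with
$$P_R\simeq P^\vee\otimes p^*\omega_X[\dim X]$$
by the standard kernel-duality formula (\cite{huybrechts2006fourier}). Dualizing the triangle defining $P$ and twisting by $p^*\omega_X[\dim X]$ exhibits $P_R$ as a shifted cone built out of $q^*\mathcal{E}\otimes p^*\mathcal{E}^\vee$; up to the canonical twist this is the kernel of the ``inverse twist''. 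I take $\Phi_{P_R}$ as the candidate quasi-inverse.

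The core step is to prove $P_R*P\simeq\mathcal{O}_\Delta$ and $P*P_R\simeq\mathcal{O}_\Delta$, where $*$ denotes kernel convolution $K'*K=\pi_{13*}(\pi_{12}^*K\otimes\pi_{23}^*K')$ on $X\times X\times X$. I would feed the defining triangles of $P$ and $P_R$ into the convolution, then use the projection formula and base change to reduce every term to an expression in $\mathcal{E}$, $\mathcal{E}^\vee$, $\omega_X$ and $\mathcal{O}_\Delta$. At this point the first spherical condition $R\Hom(\mathcal{E},\mathcal{E})\simeq\matC\oplus\matC[-\dim X]$ collapses the ``diagonal'' contribution $R\Gamma(X,\mathcal{E}^\vee\otimes\mathcal{E})$ to two copies of the structure sheaf in the correct degrees, while the condition $\mathcal{E}\otimes\omega_X\simeq\mathcal{E}$ makes the $\omega_X$-twist coming from $P_R$ harmless, so that these two contributions cancel against the cross terms. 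A diagram chase via the octahedral axiom then identifies the surviving complex with $\mathcal{O}_\Delta$, giving $\Phi_P\circ\Phi_{P_R}\simeq\mathrm{id}\simeq\Phi_{P_R}\circ\Phi_P$ and hence that $T_\mathcal{E}$ is an autoequivalence.

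The main obstacle is precisely this convolution computation: carrying the two cones through the triple product $X^3$, keeping track of the shift by $\dim X$ and the $\omega_X$-twist introduced by adjunction, and arranging the octahedral axiom so that exactly the spherical contributions cancel to leave $\mathcal{O}_\Delta$. As an alternative that sidesteps constructing the inverse explicitly, one can fix the spanning class $\{\mathcal{O}_x\}_{x\in X}$, establish full faithfulness of $T_\mathcal{E}$ by computing $T_\mathcal{E}(\mathcal{O}_x)$ and the induced maps on $\Ext$-groups (splitting into the cases $R\Hom(\mathcal{E},\mathcal{O}_x)=0$ and $\neq0$), and then upgrade to an equivalence using that $T_\mathcal{E}$ commutes with the Serre functor $-\otimes\omega_X[\dim X]$ (a consequence of $\mathcal{E}\otimes\omega_X\simeq\mathcal{E}$) together with the connectedness of $X$.
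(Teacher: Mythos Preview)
The paper does not supply a proof of this proposition at all: it merely records the statement with a citation to Seidel--Thomas \cite{MR1831820}. So there is no ``paper's own proof'' to compare against.

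Your sketch is correct and follows the standard line of argument (essentially that of \cite{huybrechts2006fourier}, Proposition~8.6, which the paper already relies on elsewhere). Both the kernel-convolution approach and the spanning-class alternative you outline are valid; the first is closer to Seidel--Thomas, the second to Huybrechts' exposition. One minor point of care in the convolution route: rather than verifying both $P_R*P\simeq\mathcal{O}_\Delta$ and $P*P_R\simeq\mathcal{O}_\Delta$ directly, it is cleaner to check only one composition (yielding full faithfulness) and then use that $T_\mathcal{E}$ commutes with the Serre functor---precisely the observation you make in your alternative---to upgrade to an equivalence. This avoids having to run the octahedral computation twice.
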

\vspace{1\baselineskip}

\section{Minoration of the categorical entropy}\label{minorationentropy}

Let $S$ be a smooth complex projective surface, $C \xhookrightarrow{i} S$ a $(-2)$-curve.
Let $\mathcal{L}\in\Pic(S)$ be a line bundle verifying $\deg_C(\mathcal{L}|_C)=l<0$. For instance, $\mathcal{L}=\mathcal{O}_S(C)$ satisfies this assumptions.
Consider the autoequivalence
$$\varphi=T_{\mathcal{O}_C}\circ \mathcal{L}.$$
The goal of this section is to show the following:

\begin{theorem}\label{positiveentropy}
The categorical entropy of $\varphi$ verifies
$$h_0(\varphi)>0.$$
\end{theorem}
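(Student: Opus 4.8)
The plan is to estimate the categorical entropy from below by tracking the growth of Ext-groups between a fixed split-generator and its images under $\varphi^n$. Since $h_t(\varphi)$ is independent of the split-generator, it suffices to compute $\delta_0(G,\varphi^n G')$ for two judiciously chosen objects $G,G'$, and by the Ext-formula it is enough to exhibit one family of Ext-groups whose dimensions grow exponentially in $n$. The natural test object is the spherical object $\mathcal{O}_C$ itself (together with a split-generator containing it, e.g. $G=\mathcal{O}_S\oplus\mathcal{O}_C$), so I would study $\varphi^n(\mathcal{O}_C)$ directly. The key computation is to understand how the two building blocks of $\varphi$ act on sheaves supported on $C$: tensoring by $\mathcal{L}$ sends $\mathcal{O}_C$ to $\mathcal{O}_C(l)$ with $l<0$, while the spherical twist $T_{\mathcal{O}_C}$ acts on the subcategory generated by $\{\mathcal{O}_C(a)\}$ exactly as the generator of an $A_1$-type braid/$\widetilde{A}_1$ action, so $T_{\mathcal{O}_C}(\mathcal{O}_C(a))$ for $a\neq 0$ is a two-term complex built from $\mathcal{O}_C$ and $\mathcal{O}_C(-2)$ (up to shift). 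Iterating, the exponents $a_n$ appearing in $\varphi^n(\mathcal{O}_C)$ should satisfy a linear recursion forced by the alternation between the shift-by-$l$ and the reflection $a\mapsto -2-a$ (or $a\mapsto -a-2$), producing a sequence $|a_n|$ that grows linearly, and the number of indecomposable summands — equivalently the total dimension of $\bigoplus_j \Ext^j(\mathcal{O}_C,\varphi^n\mathcal{O}_C)$ — that grows exponentially with ratio the largest eigenvalue of the associated $2\times 2$ integer matrix, which is $>1$ precisely because $l<0$.

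Concretely, I would first establish a closed normal form for $\varphi^n(\mathcal{O}_C)$, say as an iterated cone (convolution) of shifted copies of $\mathcal{O}_C(a)$ for various $a$, by induction on $n$, using the triangle defining $T_{\mathcal{O}_C}$ and the identity $i^*i_*\mathcal{O}_C(a)\simeq\mathcal{O}_C(a)\oplus\mathcal{O}_C(a+2)[1]$ (which appeared in the spherical-object example). At each step, computing $R\Hom(\mathcal{O}_C,\mathcal{O}_C(a))$ on $C\cong\mathbb{P}^1$ is elementary: it is $\mathbb{C}^{a+1}$ in degree $0$ when $a\geq 0$, $\mathbb{C}^{-a-1}$ in degree $1$ when $a\leq -2$, and zero when $a=-1$. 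From this I would extract a recursion of the form $d_{n+1}=\alpha d_n+\beta d_{n-1}$ (with nonnegative integer coefficients depending on $|l|$) for the relevant dimension count $d_n:=\sum_j\dim\Ext^j(\mathcal{O}_C,\varphi^n\mathcal{O}_C)$, and show its characteristic polynomial has a root $>1$. Then by the Ext-formula for $h_0$ (taking $G'=\mathcal{O}_S\oplus\mathcal{O}_C$ as well) together with $\delta_0(G,\varphi^n G')\geq d_n$ up to a bounded multiplicative factor, I conclude $h_0(\varphi)\geq\log(\text{largest root})>0$.

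The main obstacle, I expect, is the bookkeeping in the inductive normal form: the cone construction introduces both positive and negative twists and various shifts, and one has to check that no unexpected cancellation occurs — i.e. that the cone genuinely splits off the indecomposable pieces being counted rather than producing acyclic or contractible terms. In particular one must verify that the maps in the towers are "as generic as possible'' so the lower bound $\delta_0\geq d_n$ is not spoiled; this is where the explicit geometry of $\mathbb{P}^1$ and the fact that $\deg_C(\mathcal{L}|_C)<0$ strictly (not just $\leq 0$) are essential, since $l<0$ is what drives the exponents away from the ``dead zone'' $a=-1$ and keeps the complexity growing. A secondary, more routine point is to handle the contribution of $\mathcal{O}_S$ (the part of the generator not supported on $C$) and to confirm it does not interfere — here $\varphi(\mathcal{O}_S)=T_{\mathcal{O}_C}(\mathcal{L})$, and since $\mathcal{L}|_C$ has negative degree its restriction contributes controllably, so the dominant growth still comes from the $C$-supported part.
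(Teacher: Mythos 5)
Your overall strategy (lower-bound the entropy by exhibiting exponential growth of $\sum_j\dim\Ext^j(-,\varphi^n(-))$ between objects that can be compared to split generators) is the same as the paper's, but the core of the argument is missing. Everything hinges on the claim that the total Ext-dimension $d_n=\sum_j\dim\Ext^j(\mathcal{O}_C,\varphi^n\mathcal{O}_C)$ satisfies a linear recursion whose characteristic root exceeds $1$, and you defer precisely this to ``bookkeeping'' while acknowledging that cancellation in the iterated cones could spoil it. That worry is not hypothetical: the natural rank-two recursion here lives on the K-theory of the subcategory of objects supported on $C$, spanned by $[\mathcal{O}_C]$ and the point class $[\mathcal{O}_x]$, where $T_{\mathcal{O}_C}$ acts as the reflection $[F]\mapsto[F]-\chi(\mathcal{O}_C,F)[\mathcal{O}_C]$ (with $\chi(\mathcal{O}_C,\mathcal{O}_C(a))=2$ and $\chi(\mathcal{O}_C,\mathcal{O}_x)=0$) and $-\otimes\mathcal{L}$ as a unipotent shear; the composite is $\begin{pmatrix}-1&0\\ l&1\end{pmatrix}$, with spectral radius $1$. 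So the $2\times2$ integer matrix you invoke does \emph{not} have an eigenvalue $>1$: the exponential growth is entirely a phenomenon of the individual Ext-dimensions, invisible to any Euler-characteristic-level count, and your proposal supplies no mechanism that rules out the corresponding cancellations in the Postnikov towers. Also, your description of $T_{\mathcal{O}_C}(\mathcal{O}_C(a))$ as ``a two-term complex built from $\mathcal{O}_C$ and $\mathcal{O}_C(-2)$'' is inaccurate: for $a<-1$ the evaluation triangle produces $\mathcal{O}_C(a)$ together with a number of copies of $\mathcal{O}_C$ growing with $|a|$, placed in two cohomological degrees, so the normal form you propose to induct on is considerably more complicated than an $A_1$-braid picture. (A smaller point: $\mathcal{O}_S\oplus\mathcal{O}_C$ is not a split generator of $\D^b(S)$; this is harmless since you may adjoin $\mathcal{O}_C$ to an honest generator, but as written the example is wrong.)

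The paper circumvents the cancellation problem by a different device, which is the idea your proposal lacks. It applies $R\Hom(i_*\mathcal{O}_C,-)\otimes\mathcal{P}$ to the defining triangle of the twist, obtaining triangles $A_n(p)\to B_n(p)\to C_n(p)$ of complexes of vector spaces, and then tracks only the \emph{top} nonvanishing cohomology. An induction shows that $B_n(p)$ vanishes in degrees $>n+1$ while $A_n(p)$ has top cohomology exactly in degree $n+3$; the long exact sequence then degenerates to an isomorphism $\mathcal{H}^{n+2}(C_n(p))\simeq\mathcal{H}^{n+1}(C_{n-1}(l))\otimes H^1(C,\mathcal{O}_C(p-2))$, so each iteration multiplies the dimension by $1-l\ge2$ with no possibility of cancellation. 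This yields $\dim\Ext^{n+2}(i_*\mathcal{O}_C,\mathcal{P}\otimes\varphi^n(\mathcal{M}))\ge(1-l)^{n-1}$ for suitable negative-degree line bundles $\mathcal{M},\mathcal{P}$, and the short exact sequence $0\to\mathcal{O}_S(-C)\to\mathcal{O}_S\to i_*\mathcal{O}_C\to0$ transfers the bound to Orlov-type split generators. If you want to salvage your route through $\varphi^n(\mathcal{O}_C)$, you would need an analogous degree-separation argument; as it stands the proposal identifies the right quantity to estimate but does not prove the estimate.
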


First, we make some constructions.
For any $\mathcal{M}\in\Pic(S)$ we have the distinguished triangle
$$R\Hom(i_*\mathcal{O}_C,\varphi^{n-1}(\mathcal{M})\otimes\mathcal{L}) \otimes i_*\mathcal{O}_C \to \varphi^{n-1}(\mathcal{M})\otimes\mathcal{L} \to \varphi^n(\mathcal{M}).$$

Now pick $\mathcal{P}\in\Pic(S)$ and apply $(-\otimes \mathcal{P})$ and $R\Hom(i_*\mathcal{O}_C, -)$ to this triangle. We obtain:
\begin{align}\label{triangle}
R\Hom(i_*\mathcal{O}_C,&\varphi^{n-1}(\mathcal{M}) \otimes \mathcal{L})\otimes R\Hom(i_*\mathcal{O}_C,i_*\mathcal{O}_C\otimes\mathcal{P}) \\
&\to R\Hom(i_*\mathcal{O}_C,\varphi^{n-1}(\mathcal{M})\otimes\mathcal{L}\otimes\mathcal{P}) \nonumber \\
 &\to R\Hom(i_*\mathcal{O}_C,\varphi^n(\mathcal{M})\otimes\mathcal{P}). \nonumber
\end{align}

Fix $\deg_C(\mathcal{M}|_C)=m<0$. We consider the triangle (\ref{triangle}) depending on the parameter $p:=\deg_C(\mathcal{P}|_C)$.
For more clarity, we introduce the following notations.
\begin{align*}
&\widetilde{A}_n:=R\Hom(i_*\mathcal{O}_C,\varphi^{n-1}(\mathcal{M})\otimes \mathcal{L}), \\
&D(p):=R\Hom(i_*\mathcal{O}_C,i_*\mathcal{O}_C\otimes\mathcal{P}),\\
&A_n(p):=\widetilde{A}_n \otimes D(p),\\
&B_n(p) := R\Hom(i_*\mathcal{O}_C,\varphi^{n-1}(\mathcal{M}) \otimes \mathcal{L}\otimes \mathcal{P}),\\
&C_n(p) := R\Hom(i_*\mathcal{O}_C,\varphi^n(\mathcal{M})\otimes\mathcal{P}).
\end{align*}

Thus the triangle (\ref{triangle}) can be written as:
\begin{equation}
\label{triangle2}
A_n(p) \to B_n(p) \to C_n(p).
\end{equation}

\begin{proposition}\label{recprop}
For all $n\ge 1$ and any $\mathcal{P}$ with $p<0$, we have
$$\mathcal{H}^{j}(C_n(p))=0 \text{ for } j>n+2$$
and moreover
$$\mathcal{H}^{n+2}(C_n(p))\simeq \mathcal{H}^{n+1}(C_{n-1}(l))\otimes \mathcal{H}^2(D(p)) \neq 0.$$
\end{proposition}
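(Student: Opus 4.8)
The plan is to prove both statements by induction on $n$, analyzing the long exact cohomology sequence of the triangle \eqref{triangle2}. The key point is that the three terms $A_n(p)$, $B_n(p)$, $C_n(p)$ can all be understood through the one explicit computation $R\Hom(i_*\mathcal{O}_C, i_*\mathcal{O}_C \otimes \mathcal{P})$, which, as in the spherical-object example in the preliminaries, equals $R\Hom(\mathcal{O}_C, \mathcal{O}_C(p) \oplus \mathcal{O}_C(p+2)[-1])$ — wait, one must be careful with the twist by $\mathcal{O}_S(C)|_C = \mathcal{O}_C(-2)$ coming from $i^*i_* $. The upshot is that $D(p)$ has cohomology concentrated in degrees $0$ and $2$, and for $p<0$ one has $\mathcal{H}^0(D(p)) = 0$ while $\mathcal{H}^2(D(p)) \cong H^1(C, \mathcal{O}_C(p+2))^{\vee}$ wait — rather $\mathcal{H}^2(D(p)) = \Ext^2(i_*\mathcal{O}_C, i_*\mathcal{O}_C \otimes \mathcal{P})$, which is nonzero precisely because $p + 2 \le 1$, i.e. $H^1(\mathbb{P}^1, \mathcal{O}(p+2)) \ne 0$ as soon as $p < 0$. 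So $D(p)$ is a two-term complex with top cohomology in degree $2$.

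Next I would set up the induction. For the base case $n = 1$: here $\varphi^0(\mathcal{M}) = \mathcal{M}$, so $C_1(p) = R\Hom(i_*\mathcal{O}_C, \varphi(\mathcal{M}) \otimes \mathcal{P})$, and $\widetilde{A}_1 = R\Hom(i_*\mathcal{O}_C, \mathcal{M}\otimes\mathcal{L})$ has cohomology in degrees $0,1,2$ with $\mathcal{H}^2 \ne 0$ (since $\deg_C((\mathcal{M}\otimes\mathcal{L})|_C) = m + l < 0$). Then $A_1(p) = \widetilde{A}_1 \otimes D(p)$ has top cohomology $\mathcal{H}^4$, but the relevant bound comes from $B_1(p)$, whose cohomology lives in degrees $0,1,2$ (it is $R\Hom$ of $i_*\mathcal{O}_C$ against a line bundle twist, supported on a curve). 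From the long exact sequence $\cdots \to \mathcal{H}^j(B_1) \to \mathcal{H}^j(C_1) \to \mathcal{H}^{j+1}(A_1) \to \cdots$ one reads off $\mathcal{H}^j(C_1(p)) = 0$ for $j > 3 = 1+2$, and $\mathcal{H}^3(C_1(p)) \cong \mathcal{H}^4(A_1(p)) = \mathcal{H}^2(\widetilde{A}_1) \otimes \mathcal{H}^2(D(p))$; since $\widetilde{A}_1$ is built from $\mathcal{M}\otimes\mathcal{L}$ and $C_0(l)$ should be read as $R\Hom(i_*\mathcal{O}_C, \mathcal{M}\otimes\mathcal{O}_S(C))$ — here one checks the indexing so that $\mathcal{H}^2(\widetilde A_1) = \mathcal{H}^2(C_0(l))$, matching the claimed formula. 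Both tensor factors being nonzero and over a field, the product is nonzero.

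For the inductive step, assume the statement for $n - 1$. The crucial observation is that $\widetilde{A}_n = R\Hom(i_*\mathcal{O}_C, \varphi^{n-1}(\mathcal{M})\otimes\mathcal{L}) = B_{n-1}(l)$ with the parameter specialized to $p = l = \deg_C(\mathcal{L}|_C) < 0$. So I need to control $\mathcal{H}^\bullet(B_{n-1}(l))$: by the $(n-1)$ case of the triangle \eqref{triangle2}, $B_{n-1}(l)$ sits between $A_{n-1}(l)$ and $C_{n-1}(l)$, and combining $\mathcal{H}^j(C_{n-1}(l)) = 0$ for $j > n+1$ with the vanishing of $\mathcal{H}^0(D(l))$ (which kills the bottom of $A_{n-1}(l) = \widetilde A_{n-1}\otimes D(l)$), I get that $\mathcal{H}^j(\widetilde A_n) = \mathcal{H}^j(B_{n-1}(l))$ vanishes for $j$ large and that its top nonzero cohomology is in degree $n+1$, equal to $\mathcal{H}^{n+1}(C_{n-1}(l))$ (nonzero by induction). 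Then $A_n(p) = \widetilde A_n \otimes D(p)$ has top cohomology $\mathcal{H}^{n+3}(A_n(p)) = \mathcal{H}^{n+1}(\widetilde A_n)\otimes\mathcal{H}^2(D(p))$, while $B_n(p)$ has cohomology only in degrees $\le 2 < n+2$ for $n \ge 1$ — wait, actually $B_n(p) = R\Hom(i_*\mathcal{O}_C, \varphi^{n-1}(\mathcal{M})\otimes\mathcal{L}\otimes\mathcal{P})$, and this is where I must instead use the induction on $B$: $B_n(p) = \widetilde A_n$ computed with $\mathcal L$ replaced by $\mathcal L \otimes \mathcal P$, i.e.\ it is $R\Hom(i_*\mathcal O_C, \varphi^{n-1}(\mathcal M) \otimes (\mathcal L \otimes \mathcal P))$, whose top cohomology is likewise in degree $n+1$. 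Feeding both into the long exact sequence for \eqref{triangle2} yields $\mathcal{H}^j(C_n(p)) = 0$ for $j > n+2$ and $\mathcal{H}^{n+2}(C_n(p)) \cong \mathcal{H}^{n+3}(A_n(p)) = \mathcal{H}^{n+1}(\widetilde A_n) \otimes \mathcal{H}^2(D(p)) = \mathcal{H}^{n+1}(C_{n-1}(l)) \otimes \mathcal{H}^2(D(p))$, which is nonzero as a tensor product of nonzero vector spaces.

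The main obstacle, and the part requiring genuine care rather than bookkeeping, is tracking the cohomological degrees consistently: making sure that the restriction-to-$C$ degrees $m, l, p$ being negative really does force the relevant $H^1$ on $\mathbb{P}^1$ to be nonzero (so that the top $\Ext^2$ never vanishes), and — most delicately — verifying the edge maps in the long exact sequence are isomorphisms in the top degree rather than merely surjections or injections. This hinges on the degree $n+2$ being strictly above the range where $\mathcal{H}^\bullet(B_n(p))$ is supported, which is exactly why the hypothesis $n \ge 1$ appears, and on the vanishing $\mathcal{H}^0(D(p)) = 0$ for $p < 0$ shifting everything up by the full amount $2$. Once the degree arithmetic is pinned down, the nonvanishing propagates automatically since we work over a field.
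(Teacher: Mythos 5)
Your overall strategy --- induction on $n$ via the long exact cohomology sequence of the triangle $A_n(p)\to B_n(p)\to C_n(p)$, tracking only the top nonzero cohomological degree of each term --- is exactly the paper's, and your base case and your analysis of $D(p)$ (top cohomology in degree $2$, nonzero because $p-2\le -3$) are essentially right; the assertion that $D(p)$ is concentrated in degrees $0$ and $2$ is inaccurate (its possible nonzero degrees are $1$ and $2$), but harmless since only the top degree enters the argument.

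The inductive step, however, contains a genuine error. You identify $\widetilde{A}_n = R\Hom(i_*\mathcal{O}_C,\varphi^{n-1}(\mathcal{M})\otimes\mathcal{L})$ with $B_{n-1}(l)$. This is wrong: $B_{n-1}(p)=R\Hom(i_*\mathcal{O}_C,\varphi^{n-2}(\mathcal{M})\otimes\mathcal{L}\otimes\mathcal{P})$ involves $\varphi^{n-2}(\mathcal{M})$, not $\varphi^{n-1}(\mathcal{M})$; the correct identification (the paper's identity (\ref{identities})) is $\widetilde{A}_n\simeq C_{n-1}(l)$, obtained by specializing $\mathcal{P}$ to $\mathcal{L}$ in $C_{n-1}$. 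The sub-argument you then build is both unjustified and false: from the rank-$(n-1)$ triangle the long exact sequence only exhibits $\mathcal{H}^{n+2}(B_{n-1}(l))$ as a quotient of $\mathcal{H}^{n+2}(A_{n-1}(l))\neq 0$, so you cannot read off the top cohomology of $B_{n-1}(l)$ from it; and in fact $B_{n-1}(l)\simeq C_{n-2}(2l)$ has top nonzero cohomology in degree $n$, not $n+1$. You arrive at the correct intermediate statement ($\widetilde{A}_n$ has top cohomology $\mathcal{H}^{n+1}(C_{n-1}(l))$ in degree $n+1$) only because the object you should have written, $C_{n-1}(l)$, has exactly this property immediately from the induction hypothesis --- i.e.\ two errors cancel. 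The repair is one line: replace $B_{n-1}(l)$ by $C_{n-1}(l)$ and delete the detour through the rank-$(n-1)$ triangle; likewise use $B_n(p)\simeq C_{n-1}(l+p)$ (with $l+p<0$) to get the needed vanishing $\mathcal{H}^j(B_n(p))=0$ for $j>n+1$. With that correction your proof coincides with the paper's.
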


\begin{proof}

Let's start with computations for $n=1$.

By adjunction formula, $\deg_C(i^*\omega_S)=0$ since $K_S\cdot C = 0$. Now we use the adjunction 
$i_* \dashv i^*(-) \otimes\omega_C [-1]$
(see \cite{huybrechts2006fourier}, Proposition 3.35) and compute $i^*i_*\mathcal{O}_C$ using \cite{huybrechts2006fourier}, section $11$ again. We obtain:

\begin{eqnarray*}
\Ext^k_S(i_*\mathcal{O}_C,\mathcal{M}\otimes\mathcal{L}\otimes\mathcal{P}) &=& \Ext^k_C(\mathcal{O}_C,\mathcal{O}_C(m+l+p-2)[-1]) \\
&=& H^{k-1}(C,\mathcal{O}_C(m+l+p-2)).\nonumber
\end{eqnarray*}
\begin{eqnarray*}
\Ext^k_S(i_*\mathcal{O}_C,i_*\mathcal{O}_C\otimes\mathcal{P}) &=& \Ext^k_C(\mathcal{O}_C,i^*i_*(\mathcal{O}_C)\otimes\mathcal{O}_C(p-2)[-1])\\
&=& \Ext^k_C(\mathcal{O}_C,(\mathcal{O}_C\oplus\mathcal{O}_C(2)[1])\otimes\mathcal{O}_C(p-2)[-1])\nonumber \\
&=& H^{k-1}(C,\mathcal{O}_C(p-2)) \oplus H^{k}(C,\mathcal{O}_C(p)).\nonumber
\end{eqnarray*}
\vspace{0.01mm}

Since we fixed $m<0, l<0, p<0$, these Ext groups are non-zero only for $k=2$ (and possibly $k=1$ if $p<-1$).
Hence we have:
\begin{itemize}
\item $\mathcal{H}^j(\widetilde{A}_1)\neq 0$ only for $j=2$, 
\item $\mathcal{H}^j(D(p))\neq 0$ only for $j=2$ (and $j=1$ if $p<-1$) and thus $\mathcal{H}^j(A_1(p))\neq 0$ only for $j=4$ (and $j=3$ if $p<-1$),
\item $\mathcal{H}^j(B_1(p))\neq 0$ only for $j=2$.
\end{itemize}

Using the long exact sequence in cohomology induced by (\ref{triangle2}) we have
$$\mathcal{H}^j(C_1(p))\neq 0 \text{ only for } j=2,3 \text{\ and \ } \mathcal{H}^3(C_1(p)) \simeq \mathcal{H}^4(A_1(p)),$$

as it can be read on the following table:

\begin{center}
\begin{tabular}{ l c c r }
    & $\mathcal{H}^2$ & $\mathcal{H}^3$ &  $\mathcal{H}^4$ \\
   $A_1$ & 0 &      & $\ast$ \\
   $B_1$ & $\ast$ & 0 & 0\\
   $C_1$ & $\ast$ & $\ast$ & 0\\
 \end{tabular}
\end{center}
where $\ast$ means that the space does not vanish, and the empty slots are irrelevant to our calculations.

For any $n\ge 1$ we have the identities
\begin{eqnarray}\label{identities}
\widetilde{A}_{n} \simeq C_{n-1}(l) \text{ and } B_{n}(p)\simeq C_{n-1}(l+p).
\end{eqnarray}

For $n=1$, by (\ref{identities}) we get
\begin{eqnarray*}
\mathcal{H}^4(A_1(p)) &\simeq& \mathcal{H}^2(\widetilde{A}_1) \otimes \mathcal{H}^2(D(p))\\
&\simeq& \mathcal{H}^2(C_0(l)) \otimes \mathcal{H}^2(D(p))\\
&\neq& 0.
\end{eqnarray*}

Assume that the lemma is true for all $p<0$ on rank $n-1$.
Since $l$ and $p$ are negative, by induction hypothesis and (\ref{identities}) we have
\begin{itemize}
\item $\mathcal{H}^j(A_n(p))=0$ for $j>n+3$, 
\item $\mathcal{H}^j(B_n(p))=0$ for $j>n+1$,
\item $\mathcal{H}^{n+3}(A_n(p)) \simeq \mathcal{H}^{n+1}(\widetilde{A}_n) \otimes \mathcal{H}^2(D(p)) \neq 0$. 
\end{itemize}
Thus using the long exact sequence in cohomology induced by (\ref{triangle2}) we obtain
$$\mathcal{H}^{n+2}(C_n(p))\simeq \mathcal{H}^{n+3}(A_n(p)).$$
Once again this can be read on the table:
\begin{center}
\begin{tabular}{ l c c r }
    & $\mathcal{H}^{n+1}$ & $\mathcal{H}^{n+2}$ & $\mathcal{H}^{n+3}$ \\
   $A_n$ &  &   & $\ast$ \\
   $B_n$ & $\ast$ & 0 & 0\\
   $C_n$ & $ $  & $\ast$ & 0\\
 \end{tabular}
\end{center}

Finally by the identities (\ref{identities}), we obtain
$$\mathcal{H}^{n+2}(C_n(p)) \simeq \mathcal{H}^{n+1}(C_{n-1}(l)) \otimes \mathcal{H}^2(D(p)).$$

\end{proof}

\begin{corollary}\label{corexpoext}
For any $\mathcal{M},\mathcal{P}$ with $m,p<0$ and $n\in\matZ_{\ge 1}$, we have
\vspace{2mm}
\begin{eqnarray*}
\Ext^{n+2}_S(i_*\mathcal{O}_C,\mathcal{P} \otimes \varphi^n(\mathcal{M})) \simeq H^1(C,\mathcal{O}_C(m+l-2)) \otimes H^1(C,\mathcal{O}_C(p-2)) \\
\otimes H^1(C,\mathcal{O}_C(l-2))^{\otimes n-1}.
\end{eqnarray*}

\vspace{1.5mm}
In particular, $\dim \Ext^{n+2}_S(i_*\mathcal{O}_C,\mathcal{P} \otimes \varphi^n(\mathcal{M})) > (1-l)^{n-1}$.
\end{corollary}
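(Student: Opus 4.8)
The plan is to unwind the recursion furnished by Proposition \ref{recprop}. Since $\Ext^j_S(-,-)$ is by definition the $j$-th cohomology of the corresponding $R\Hom$-complex, and $C_n(p)=R\Hom(i_*\mathcal{O}_C,\varphi^n(\mathcal{M})\otimes\mathcal{P})$, we have $\Ext^{n+2}_S(i_*\mathcal{O}_C,\mathcal{P}\otimes\varphi^n(\mathcal{M}))=\mathcal{H}^{n+2}(C_n(p))$, so everything reduces to understanding this cohomology sheaf. I would prove by induction on $n\ge 1$ that, for every $p<0$,
$$\mathcal{H}^{n+2}(C_n(p))\simeq\mathcal{H}^2(C_0(l))\otimes\mathcal{H}^2(D(l))^{\otimes(n-1)}\otimes\mathcal{H}^2(D(p)).$$
The case $n=1$ is precisely the isomorphism in Proposition \ref{recprop} (with $\mathcal{H}^{1+1}(C_0(l))=\mathcal{H}^2(C_0(l))$ and no middle factor). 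For the inductive step, Proposition \ref{recprop} gives $\mathcal{H}^{n+2}(C_n(p))\simeq\mathcal{H}^{n+1}(C_{n-1}(l))\otimes\mathcal{H}^2(D(p))$, and since $l<0$ the induction hypothesis applies to $C_{n-1}(l)$, turning $\mathcal{H}^{n+1}(C_{n-1}(l))$ into $\mathcal{H}^2(C_0(l))\otimes\mathcal{H}^2(D(l))^{\otimes(n-1)}$; combining these yields the claim.

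Next I would identify the three types of factors using the computations already carried out in the proof of Proposition \ref{recprop}. By the identities (\ref{identities}), $C_0(l)\simeq\widetilde{A}_1=R\Hom(i_*\mathcal{O}_C,\mathcal{M}\otimes\mathcal{L})$, so $\mathcal{H}^2(C_0(l))=\Ext^2_S(i_*\mathcal{O}_C,\mathcal{M}\otimes\mathcal{L})=H^1(C,\mathcal{O}_C(m+l-2))$. And since $C$ is a curve, $H^2(C,\mathcal{O}_C(q))=0$, so the computation of $R\Hom(i_*\mathcal{O}_C,i_*\mathcal{O}_C\otimes\mathcal{P})$ collapses to $\mathcal{H}^2(D(q))=H^1(C,\mathcal{O}_C(q-2))$ for any $q$; taking $q=l$ and $q=p$ and rearranging the tensor factors gives the asserted isomorphism.

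For the final dimension bound, I would invoke $\dim_\matC H^1(\matP^1,\mathcal{O}(d))=-d-1$ for $d\le-2$ (Serre duality on $\matP^1$). Because $m,l,p$ are negative integers, the degrees $m+l-2$, $l-2$ and $p-2$ are all $\le-2$, so the isomorphism above gives
$$\dim\Ext^{n+2}_S(i_*\mathcal{O}_C,\mathcal{P}\otimes\varphi^n(\mathcal{M}))=(1-m-l)\,(1-l)^{n-1}\,(1-p).$$
Since $1-m-l\ge 2$ and $1-p\ge 2$, the factor $(1-m-l)(1-p)$ exceeds $1$, and hence the dimension is $>(1-l)^{n-1}$. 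The only delicate point in the whole argument is keeping track of the indices through the iteration; all the substantive input has already been supplied by Proposition \ref{recprop}.
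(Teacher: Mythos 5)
Your proposal is correct and follows essentially the same route as the paper: unwind the recursion of Proposition \ref{recprop} by induction to get $\mathcal{H}^{n+2}(C_n(p))\simeq\mathcal{H}^2(C_0(l))\otimes\mathcal{H}^2(D(l))^{\otimes(n-1)}\otimes\mathcal{H}^2(D(p))$, identify each factor via the $\Ext$-computations already done for $n=1$, and conclude with the dimension count $h^1(\matP^1,\mathcal{O}(d))=-d-1$. The paper's proof is just a terser version of the same chain of isomorphisms, so nothing further is needed.
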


\begin{proof}
By induction on Proposition \ref{recprop}, we have
\begin{eqnarray*}
\Ext^{n+2}_S(i_*\mathcal{O}_C,\mathcal{P} \otimes \varphi^n(\mathcal{M})) &\simeq& \mathcal{H}^{n+2}(C_n(p)),\\
&\simeq& \mathcal{H}^{n+1}(C_{n-1}(l)) \otimes \mathcal{H}^2(D(p),\\
&\simeq& \mathcal{H}^{2}(C_0(l)) \otimes \mathcal{H}^2(D(p)) \otimes \mathcal{H}^2(D(l))^{\otimes n-1},\\
&\simeq& H^1(C,\mathcal{O}_C(m+l-2)) \otimes H^1(C,\mathcal{O}_C(l-2))^{\otimes n-1}\\
&& \otimes H^1(C,\mathcal{O}_C(p-2)).
\end{eqnarray*}
\end{proof}

\begin{proof}[Proof of theorem \ref{positiveentropy}]

Recall that by Orlov \cite{MR2567400}, for any very (anti)-ample line bundle $\mathcal{M}$ on a smooth projective variety $X$, the vector bundle $\mathcal{M}\oplus \mathcal{M}^{\otimes 2}\oplus \dots \oplus \mathcal{M}^{\otimes \dim X+1}$ is a generator of $\D^b(X)$.

In our case, we fix a generator $G=\mathcal{M} \oplus \mathcal{M}^{\otimes 2} \oplus \mathcal{M}^{\otimes 3}$ of $\D^b(S)$ with $\mathcal{M}\in\Pic(S)$ so that $\mathcal{M}^\vee$ is very ample on $S$. Thus $m:=\deg_C(\mathcal{M}|_C)<0$. Now choose a line bundle $\mathcal{P}\in\Pic(S)$ so that $\mathcal{P}^\vee$ is very ample. Up to taking powers of $\mathcal{P}^\vee$, we can assume that $\mathcal{P}_0^\vee := \mathcal{P}^\vee \otimes \mathcal{O}_S(-C)$ is also very ample (see \cite{hartshorne2013algebraic}, II, ex. $7.5$). Then $G_1:= \mathcal{P}^\vee \oplus (\mathcal{P}^\vee)^{\otimes 2} \oplus (\mathcal{P}^\vee)^{\otimes 3}$ and $G_2 :=  \mathcal{P}_0^\vee \oplus (\mathcal{P}_0^\vee)^{\otimes 2} \oplus (\mathcal{P}_0^\vee)^{\otimes 3}$ are also generator of $\D^b(S)$.

By Corollary \ref{corexpoext}, for any $n\ge 1$ we have 
$$(1-l)^{n-1} \le \dim \Ext^{n+2}(i_*\mathcal{O}_C,\mathcal{P}\otimes \varphi^n(\mathcal{M}) \le \dim \Ext^{n+2}(i_*\mathcal{O}_C,\mathcal{P}\otimes \varphi^n(G)).$$
\vspace{2mm}
Write $\delta_0'(F,G):= \sum_{j\in\matZ} \dim\Ext^j(F,G)$. Considering the exact sequence
$$0 \to \mathcal{O}_S(-C) \to \mathcal{O}_S \to i_*\mathcal{O}_C \to 0,$$
we obtain
\vspace{2mm}
\begin{eqnarray*}
(1-l)^{n-1} \le \delta_0^{'}(i_*\mathcal{O}_C,\mathcal{P} \otimes \varphi^n(G)) &\le& \delta_0^{'}(\mathcal{O}_S, \mathcal{P} \otimes \varphi^n(G)) + \delta_0^{'}(\mathcal{O}_S(-C),\mathcal{P} \otimes \varphi^n(G)),\\
&\le& \delta_0^{'}(\mathcal{P}^\vee, \varphi^n(G)) + \delta^{'}_0(\mathcal{P}^\vee_0, \varphi^n(G)),\\
&\le& \delta_0^{'}(G_1, \varphi^n(G)) + \delta^{'}_0(G_2, \varphi^n(G)),
\end{eqnarray*}
\vspace{0.1mm}

Thus, either $\delta_0^{'}(G_1, \varphi^n(G))$ or $\delta^{'}_0(G_2, \varphi^n(G))$ has exponential growth. By Lemma \ref{lemmaentropy} both terms can be used to compute the categorical entropy $h_0(\varphi)$, hence $$h_0(\varphi)>0.$$

\end{proof}

\begin{remark}
The same result is also true with $T_{\mathcal{O}_C(a)} \circ \mathcal{L}$, for $a$ a non-zero integer: one may perform the same proof with the care of choosing a line bundle $\mathcal{P}$ verifying $\deg_C(\mathcal{P}|_C)=p \ll 0$.
\end{remark}

\begin{remark}
It is interesting to remark that the functor $(-\otimes \mathcal{L})$ can be realized as composition of spherical twists $T_{\mathcal{O}_C(a_1)}\circ \dots \circ T_{\mathcal{O}_C(a_n)}$ with a nice choice of $a_1,\dots,a_n \in \matZ$. See (\cite{MR2198807}, Lemma $4.15$) for the claim. In particular, composition of spherical twists might have positive entropy.
\end{remark}

To finish the proof of Theorem \ref{maintheorem}, it remains to show that the action of $\varphi$ on the cohomology of $S$ has spectral radius $1$. This will be treated in section \ref{actioncoh} (see Corollary \ref{corbaseautoequiv}).

\section{Action of spherical twists on cohomology}\label{actioncoh}

\subsection{Action on cohomology}
We briefly recall how to describe the action on the cohomology of a variety induced by an autoequivalence of its derived category of coherent sheaves.
The definitions and proofs of claims can be found in \cite{huybrechts2006fourier}, section $5.3$. 

Let $X$ be a smooth projective variety over $\matC$. For an object $F \in \D^b(X)$, we define the \textit{Mukai vector} of $F$ as the cohomology class $v(F):=\ch(F)\sqrt{\td F}\in H^*(X,\matQ)$.

\begin{definition}
For a Fourier-Mukai transform $\varphi\in\Aut(\D^b(X))$ with kernel $\mathcal{P}\in\D^b(X\times X)$, we define its \textit{action on cohomology} $\varphi^H$ as the cohomological Fourier-Mukai morphism
$$\varphi^H : H^*(X,\matQ) \to H^*(X,\matQ)$$
with kernel $v(\mathcal{P})$.
\end{definition}

This definition is functorial: $\varphi^H$ is an automorphism of $H^*(X,\matQ)$, and for another autoequivalence $\psi\in\Aut(\D^b(X))$, we have $(\psi \circ \varphi)^H=\psi^H \circ \varphi^H$.

For $v=\sum_j v_{2j}\in \bigoplus_j H^{2j}(X,\matQ)$, we denote by $v^\vee := \sum_j (-1)^j v_{2j}$. We define the \textit{Mukai pairing} on $H^{2*}(X,\matQ)$ as the quadratic form
$$\langle v,w \rangle := \int_X \exp(\chern_1(X)/2)\cdot v^\vee \cdot w,$$
where the integral symbol means taking the top degree part via the identification $H^{2\dim X}(X,\matQ) \simeq \matQ$.
\newline

\subsection{The case of spherical twists and standard equivalences}
Let $S$ be a smooth projective complex surface with finitely many $(-2)$-curves in disjoint union of $A$-$D$-$E$ configurations.

We define $B := \langle T_{\mathcal{O}_C(a)} | \ C \ (-2)\text{-curve}, \ a\in\matZ \rangle$ a subgroup of the group of autoequivalences of $\D^b(S)$, and
$$G := \langle B, \Pic(S)  \rtimes \Aut(S) \rangle \times \matZ[1].$$

\begin{theorem}[=Theorem \ref{theoremactioncohintro}]\label{theoremactioncoh}
Let $\varphi\in G$ be an autoequivalence. 
Then, up to a shift, we have a (not necessarily unique) decomposition
$$\varphi = b \circ (-\otimes\mathcal{L}) \circ f^*$$
with $b\in B, \mathcal{L}\in\Pic(S), f\in\Aut(S)$, and 
$$\rho(\varphi^H)=\rho(f^*).$$
\end{theorem}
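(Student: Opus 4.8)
The plan is to reduce the computation of $\rho(\varphi^H)$ to the action of $f^*$ by showing that the contributions of the spherical twists $b \in B$ and the tensor $(-\otimes\mathcal{L})$ are ``unipotent-like'' and therefore do not change the spectral radius. First I would record the basic structure of $H^*(S,\matQ)$: it splits as $H^0 \oplus H^2 \oplus H^4$, and any autoequivalence preserves this up to the grading issues controlled by the Mukai vector; more precisely, $\varphi^H$ is an isometry for the Mukai pairing and preserves the rank filtration (the image of a class of rank $0$ supported in codimension $\ge 1$ stays there), so $\varphi^H$ is block upper-triangular with respect to the filtration $H^4 \subseteq H^2 \oplus H^4 \subseteq H^*$. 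The spectral radius is then the max of the spectral radii of the three diagonal blocks. On $H^0$ and $H^4$ (rank and ``point class'' pieces) every standard equivalence and every spherical twist acts by $\pm 1$, so the only interesting block is the middle one, acting on $H^2(S,\matQ)\oplus(\text{stuff})$ — effectively on $H^2$.

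Next I would analyze each generator's action on $H^2$. For $f^*$ with $f \in \Aut(S)$, the action on $H^2$ is exactly the usual pullback on cohomology, giving $\rho(f^*)$ as the ``expected'' answer. For $(-\otimes\mathcal{L})$, the cohomological action is multiplication by $\ch(\mathcal{L}) = e^{c_1(\mathcal{L})}$, which on $H^2$ is the identity (it only shifts $H^0$ into higher degree), hence acts unipotently — spectral radius $1$ on each block and it commutes appropriately with the filtration. For a spherical twist $T_{\mathcal{O}_C(a)}$ along a $(-2)$-curve, the key fact is that $(T_{\mathcal{O}_C(a)})^H$ acts on $H^2$ as the reflection $s_{[C]}$ in the $(-2)$-class $[C]$ (this is the standard ``spherical twist induces a Picard-Lefschetz reflection'' computation; it follows from the kernel $\mathrm{Cone}(q^*\mathcal{E}^\vee\otimes p^*\mathcal{E} \to \mathcal{O}_\Delta)$ and a Mukai-vector computation, using $v(i_*\mathcal{O}_C(a)) = [C] + (\text{a multiple of the point class})$ and $\langle v,v\rangle = -2$). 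Hence $B$ acts on $H^2$ through the finite Weyl group $W$ generated by the reflections in the classes of the $(-2)$-curves in the $A$-$D$-$E$ configuration — and crucially this group is \emph{finite}, so every element of $B$ has spectral radius $1$ on $H^2$.

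With these pieces in hand, the argument concludes as follows. Writing $\varphi^H = b^H \circ (e^{c_1(\mathcal{L})}) \circ (f^*)^H$, I would work on the middle block: $b^H$ restricts to an element $w \in W$ (finite order), $(-\otimes\mathcal{L})$ restricts to the identity, and $(f^*)^H$ restricts to $f^*$ on $H^2$. So on $H^2$, $\varphi^H$ restricts to $w\circ f^*$. Since $w$ lies in a finite group, conjugating or taking powers shows $\rho(w \circ f^*) = \rho(f^*)$: indeed $(w f^*)^N$ for $N = \mathrm{ord}(w)$-adjusted choices is not literally $(f^*)^N$ because $w$ and $f^*$ need not commute, so the cleanest route is to note that $w f^*$ preserves the (rational, hence integral up to scaling) lattice $H^2(S,\matZ)$, that its eigenvalues are algebraic integers whose Galois conjugates and inverses are also eigenvalues (by the isometry property for the intersection form on $H^2$), and that the finite-index subgroup generated by $f^*$ inside $\langle w, f^*\rangle$ acting on the lattice forces $\rho(wf^*) = \rho(f^*)$ via the submultiplicativity $\rho(wf^*)^{\,k} = \rho((wf^*)^k)$ combined with the fact that $(wf^*)^k$ differs from $(f^*)^k$ by a bounded (finite) set of isometries. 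The main obstacle is precisely this last non-commutativity point: making rigorous that inserting a finite-order isometry between powers of $f^*$ cannot inflate the growth rate. I expect the right tool is either (a) the observation that $\langle B, \dots\rangle$ normalizes something, or more robustly (b) a direct spectral-radius estimate: $\|(wf^*)^n\| \le C\|(f^*)^{n}\|$ fails in general, so instead one shows $\rho(\varphi^H)^{} = \lim \|(\varphi^H)^n\|^{1/n}$ and bounds $\|(\varphi^H)^n\|$ on $H^2$ by $|W| \cdot \max_{w'\in W}\|w' (f^*)^{n} \cdots\|$ — this requires care, and is where I would spend the bulk of the effort, likely invoking that $W$ acts on a definite (negative-definite) sublattice spanned by the $(-2)$-curves while $f^*$ preserves the orthogonal decomposition up to finite ambiguity. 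Finally, combining the middle block ($\rho = \rho(f^*)$) with the trivial outer blocks ($\rho=1 \le \rho(f^*)$, since $\rho(f^*)\ge 1$ always) gives $\rho(\varphi^H) = \rho(f^*)$, and specializing to $\varphi = T_{\mathcal{O}_C}\circ(-\otimes\mathcal{L})$ with $f = \mathrm{id}$ yields Corollary \ref{corbaseautoequiv} with $\rho(\varphi^H)=1$.
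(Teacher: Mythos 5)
Your skeleton --- block-triangularity of $\varphi^H$ with respect to the cohomological grading, reduction to the $H^2$-block, spherical twists acting there as the reflections $s_{[C]}$ generating a finite $A$-$D$-$E$ Coxeter group, line bundles acting trivially on that block --- is exactly the paper's Proposition \ref{reduc2} and Lemma \ref{indepon2}. But the step you yourself flag as the main obstacle is a genuine gap, and the principle you fall back on is false: for $w=\left(\begin{smallmatrix}0&1\\1&0\end{smallmatrix}\right)$ of order $2$ and $g=\mathrm{diag}(2,1)$, both preserving the lattice $\matZ^2$, one has $\rho(wg)=\sqrt{2}\neq 2=\rho(g)$. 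So ``$w$ has finite order'' plus ``everything preserves a lattice'' does not give $\rho(wf^*)=\rho(f^*)$, and your assertion that $(wf^*)^k$ differs from $(f^*)^k$ by a bounded set of isometries is precisely the commutation statement you have not proved. The Galois-conjugate/isometry considerations do not repair this.

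The paper closes the gap with structure you did not exploit. First, since $\rho((\varphi^H)^{\circ m})=\rho(\varphi^H)^m$ and there are only finitely many $(-2)$-curves, one may replace $\varphi$ by a power so that $f$ fixes each $(-2)$-curve. Second, $B$ is normal in $G$ (via $\phi\circ T_{\mathcal E}\simeq T_{\phi(\mathcal E)}\circ\phi$), and conjugating $T_{\mathcal O_C(a)}$ by $f^*$ or by $(-\otimes\mathcal L)$ produces a twist along $\mathcal O_{C}(a')$ on the \emph{same} curve; since $(T^H_{\mathcal O_C(a)})_2=s_{[C]}$ is independent of $a$ (Lemma \ref{indepon2}), every conjugated factor acts identically on $H^2$. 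Hence $((\varphi^{\circ m})^H)_2=(b^H)_2^{\circ m}\circ (f^*)_2^{\circ m}$ exactly: the element $w=(b^H)_2$ of the finite Coxeter group genuinely commutes with $(f^*)_2$ because $f^*$ fixes each class $[C]$. Taking $m$ divisible by the order of $w$ kills the twist contribution and yields $\rho((\varphi^H)_2)^m\le\rho(f^*)^m$, whence the theorem. Your closing remark about the negative-definite sublattice spanned by the $(-2)$-classes points toward an equivalent argument (after passing to the power, $W$ acts trivially on its orthogonal complement and $f^*$ trivially on the sublattice), but as written it is a plan rather than a proof. Note also that the theorem asserts the existence of the decomposition $\varphi=b\circ(-\otimes\mathcal L)\circ f^*$, which you take as given; this too rests on the normality of $B$ in $G$ and is part of the paper's argument.
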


\begin{proof}

Pick $\varphi\in G$. First, we show that $\varphi$ admits a decomposition as stated. For the next lemma, see (\cite{huybrechts2006fourier}, Lemma 8.21).

\begin{lemma}\label{lemmatwistcommute}
For any smooth projective variety $X$, for any spherical object $\mathcal{E}\in\D^b(X)$ and for any autoequivalence $\phi \in \Aut(\D^b(X))$, we have
$$\phi \circ T_\mathcal{E} \simeq T_{\phi(\mathcal{E})} \circ \phi.$$
\end{lemma}

Hence, when $\phi$ is an autoequivalence belonging to $\Pic(S)\rtimes \Aut(S)$ and $C$ is a $(-2)$-curve, we shall consider, for $a\in\matZ$, the image $\phi(\mathcal{O}_C(a))$.

First, for any $\mathcal{L}\in\Pic(S)$, we have $\mathcal{L}\otimes\mathcal{O}_{C}(a)=\mathcal{O}_{C}(a+l)$ with $l:=\deg_{C}(\mathcal{L}|_{C})$.

Secondly, consider an isomorphism $f : S \to S$. It induces an isomorphism $\bar{f} : C \to C'$ for some $(-2)$-curve $C'$ as the image of a $(-2)$-curve must be a $(-2)$-curve, and it's easy to check, writing $i$ and $j$ the natural inclusion of $C$ and $C'$ respectively, that $f^*(j_*\mathcal{O}_{C'}(a)) \simeq i_*(\bar{f}^*(\mathcal{O}_{C'}(a))) \simeq i_*(\mathcal{O}_{C}(a))$.


We conclude from this that $B$ is normal in $G$. In fact, we also have $B\cap \Aut(S) = \{0\}$ (\cite{MR2198807}, Remark $4.17$). Hence, up to a shift, $\varphi$ decomposes as
$$\varphi=b\circ (-\otimes \mathcal{L}) \circ f^*,$$
with $b\in B, \mathcal{L}\in\Pic(S)$, $f\in\Aut(S)$ and such $f$ does not depend on the decomposition.
\newline

Note that $\rho((\varphi^H)^{\circ m})=\rho(\varphi^H)^m$, i.e. the spectral radius of the morphism is totally determined by the spectral radius of its powers. Hence, we can assume that $f$ preserves each $(-2)$-curve: $f$ acts by permutation on the set of $(-2)$-curves which is finite, thus some power of $f$ fixes each of them.

We set
$$b=T_{\mathcal{O}_{C_1}(a_1)} \circ \cdots \circ T_{\mathcal{O}_{C_k}(a_k)}$$
with $a_1,\dots,a_k\in\matZ$ and $C_1,\dots,C_k$ $(-2)$-curves. 

Now, we use Lemma \ref{lemmatwistcommute}. Since $B$ is normal in $G$, for any $m\ge 1$ there is a line bundle $\mathcal{L}_m\in \Pic(S)$ and equivalences $b_j\in B$, $j=2,\dots,m$ such that
$$\varphi^{\circ m} = b\circ b_2\circ \cdots \circ b_m \circ (-\otimes \mathcal{L}_m) \circ (f^*)^{\circ m},$$
where each $b_j$ is given by
$$b_j = T_{\mathcal{O}_{C_1}(a_1^{'})}\circ \cdots \circ T_{\mathcal{O}_{C_k}(a_k^{'})}$$
for some integers $a_1^{'},\dots,a_k^{'} \in\matZ$ (depending on $j$). In other words, each $b_j$, $j=2,\dots,m$ is a composition of spherical twists along line bundles over the same curves but with different degrees.

We introduce the following notation: for a morphism $g : H^*(S,\matQ) \to H^*(S,\matQ)$, we denote by $g_2 : H^2(S,\matQ) \to H^2(S,\matQ)$ the restriction to $H^2(S,\matQ)$ of its composition with the projection $p : H^*(S,\matQ) \to H^2(S,\matQ)$.

\begin{proposition}\label{reduc2}
We have
$$\rho((\varphi^H)^{\circ m}) = \max\left( \rho ((\varphi^H)^{\circ m}_2), \rho(f^*)^m\right ).$$
\end{proposition}

\begin{proof}
We write $1\in H^0(S,\matQ)$ the natural generator, $[x]\in H^4(S,\matQ)$ its dual. We fix a graded basis $(1,e_1,\dots,e_k, [x])$ of $H^{2*}(X,\matQ)$ composed by homogeneous elements.

We have $(f^*)^H = f^*$ (pullback in cohomology) and $(-\otimes \mathcal{L}_m)^H =(- \cdot \exp\chern_1(\mathcal{L}_m))$ (\cite{huybrechts2006fourier}, ex. $5.37$). Thus the matrix of $(-\otimes\mathcal{L}_m)^H\circ (f^{*\circ m})^H$ is lower-triangular by blocks, each block corresponding to a graded component of $H^*(S,\matQ)$.

Fix an integer $a\in\matZ$. We shall compute $T_{\mathcal{O}_C(a)}^H$. By Grothendieck-Riemann-Roch, we have $v(\mathcal{O}_C(a)) = [C]+(a+1)[x]$, where $[C]$ denotes the cohomology class of the cycle $C$.

Now, by \cite{huybrechts2006fourier}, $T_{\mathcal{O}_C(a)}$ acts as the identity on $H^{odd}(S,\matQ)$, and for any $w\in H^{2*}(S,\matQ)$ we have

\begin{eqnarray}\label{twistoncoh}
T_{\mathcal{O}_C(a)}^H(w) = w - \langle v(\mathcal{O}_C(a)), w \rangle v(\mathcal{O}_C(a)).
\end{eqnarray}

Denote $w=(w_0,w_2,w_4)\in \matQ \oplus H^2(S,\matQ) \oplus \matQ$. As $\chern_1(S)\cdot [C] = -K_S\cdot C = 0$, we get

\begin{eqnarray}\label{mukpair}
\langle v(\mathcal{O}_C(a)), w\rangle &=& \int_S (0,-[C], a+1)\cdot (w_0,w_2,w_4)\cdot \exp(c_1(S)/2) \nonumber \\
&=& \int_S \bigg(0,-w_0[C], -[C]\cdot w_2 + (a+1)w_0\bigg) \cdot \left(1, \frac{c_1(S)}{2} , \frac{c_1(S)^2}{8}\right) \nonumber \\
&=& (a+1)w_0 -[C]\cdot w_2.
\end{eqnarray}
\vspace{0.01\baselineskip}

This scalar only depends on $w_0$ and $w_2$, and $v(\mathcal{O}_C(a))$ has components only in degree $2$ and $4$ so by (\ref{twistoncoh}) we conclude that $T_{\mathcal{O}_C(a)}^H$ acts as identity on $H^{4}(S,\matQ)$, and by (\ref{twistoncoh}) and (\ref{mukpair}) we see that $T_{\mathcal{O}_C(a)}^H(1)=1+R$ with $R\in H^{\ge2}(S,\matQ)$.

Hence the matrix of $(\varphi^{\circ m})^H$ is triangular by blocks, and both spherical twists and tensors by line bundles have spectral radius $1$ on $H^{j}(S,\matQ)$, $j\neq 2$. We obtain the result.
\end{proof}

\begin{lemma}\label{indepon2}
For any $(-2)$-curve $C$ and any $a\in\matZ$, the map
$$(T^H_{\mathcal{O}_C(a)})_2 : H^2(S,\matQ) \to H^2(S,\matQ)$$
does not depend on $a$. 
\end{lemma}

\begin{proof}
By (\ref{mukpair}), we see that for all $w_2\in H^2(S,\matQ)$,  we have $(T^H_{\mathcal{O}_C(a)})_2(w_2) = w_2+([C]\cdot w_2)[C]$.
\end{proof}

Hence, all the morphisms $b^H, b_j^H$, $j=2,\dots,m$, restrict to the same morphism $(b^H)_2 : H^2(S,\matQ) \to H^2(S,\matQ)$, so we obtain

$$(\varphi^H)_2^{\circ m} = (b^H)_2^{\circ m} \circ (-\otimes \mathcal{L}_m)^H_2 \circ (f^{*})^{\circ m}_2.$$
\vspace*{0.01mm}

\begin{proposition}[\cite{MR1831820}]\label{relationcox}
For any two distinct $(-2)$-curves $C_1$ and $C_2$ in $S$, we have
\begin{align*}
T_{\mathcal{O}_{C_1}} \circ T_{\mathcal{O}_{C_{2}}} \circ T_{\mathcal{O}_{C_1}} &\simeq T_{\mathcal{O}_{C_{2}}} \circ T_{\mathcal{O}_{C_1}} \circ T_{\mathcal{O}_{C_{2}}} &\text{ if } C_1\cdot C_2 = 1, \\
T_{\mathcal{O}_{C_1}} \circ T_{\mathcal{O}_{C_2}} &\simeq T_{\mathcal{O}_{C_2}} \circ T_{\mathcal{O}_{C_1}} &\text{ if } C_1\cdot C_2 = 0.
\end{align*}
\end{proposition}

\begin{proof}
The proof is a consequence of Lemma \ref{lemmatwistcommute}. For more details, see (\cite{huybrechts2006fourier}, Prop. $8.22$).
\end{proof}

%
%

We combine the relations in Proposition \ref{relationcox} and the fact that $(T_{\mathcal{O}_C}^H)^{\circ 2}=\Id$ by (\ref{twistoncoh}) to conclude that the group $\langle (T_{\mathcal{O}_C}^H)_2 \ | \ C \text{ a } (-2)\text{-curve} \rangle \subseteq \Aut(H^2(S,\matQ))$ is a quotient of a finite direct product of Coxeter groups of type $A$,$D$ and $E$. In particular, it is finite, thus $(b^H)_2^{\circ m} = \Id$ for some $m\gg 0$. We obtain
$$(\varphi^H)_2^{\circ m} = (-\otimes \mathcal{L}_m)^H_2 \circ (f^{*})^{\circ m}_2.$$
To conclude the proof, note that we can choose a basis for which both $(-\otimes \mathcal{L}_m)^H_2$ and $(f^{*})^{\circ m}_2$ are lower-triangular, with $(-\otimes \mathcal{L}_m)^H_2$ having only $1'$s on the diagonal.

\end{proof}

As a corollary of Theorem \ref{theoremactioncoh}, we see that the autoequivalence studied in section \ref{minorationentropy} acts in cohomology with spectral radius $1$.

\begin{corollary}\label{corbaseautoequiv}
Let $S$ be any smooth projective surface, $C \hookrightarrow S$ a $(-2)$-curve. Let $\mathcal{L}\in\Pic(S)$ be a line bundle verifying $\deg_C(\mathcal{L}|_C)<0$. Then
$$\rho( T_{\mathcal{O}_C}^H \circ \mathcal{L}^H) = 1.$$
\end{corollary}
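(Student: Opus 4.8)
The plan is to deduce Corollary \ref{corbaseautoequiv} directly from Theorem \ref{theoremactioncoh}, specializing to $b = T_{\mathcal{O}_C}$, the given line bundle $\mathcal{L}$, and $f = \Id$. The autoequivalence in question is $\varphi = T_{\mathcal{O}_C} \circ (-\otimes \mathcal{L})$, which visibly lies in $G = \langle B, \Pic(S) \rtimes \Aut(S) \rangle \times \matZ[1]$ (it is a single spherical twist along a $(-2)$-curve composed with a tensor by a line bundle, no shift). The only hypothesis of Theorem \ref{theoremactioncoh} is that the union $Z$ of $(-2)$-curves of $S$ be a disjoint union of finite $A$-$D$-$E$ configurations; so the first thing I would address is why we may assume this \emph{without loss of generality} even though the statement of Corollary \ref{corbaseautoequiv} says "any smooth projective surface".

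The key observation is that the computation $T_{\mathcal{O}_C}^H \circ \mathcal{L}^H$ only involves the single curve $C$, the class $[C] \in H^2(S,\matQ)$, and the Chern class $\chern_1(\mathcal{L})$ — none of the other $(-2)$-curves of $S$ play any role. Concretely, from equation (\ref{twistoncoh}) and the Mukai-pairing computation (\ref{mukpair}) one sees that $T_{\mathcal{O}_C}^H$ acts as the identity on $H^0$ modulo $H^{\ge 2}$, as the identity on $H^4$ and on $H^{\mathrm{odd}}$, and on $H^2$ by the reflection $w_2 \mapsto w_2 + ([C]\cdot w_2)[C]$; meanwhile $\mathcal{L}^H$ is $(-\cdot \exp \chern_1(\mathcal{L}))$, which is unipotent-block-triangular. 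So the matrix of $\varphi^H = T_{\mathcal{O}_C}^H \circ \mathcal{L}^H$ is block lower-triangular with the $H^0$- and $H^4$-blocks equal to $1$, and the $H^{\mathrm{odd}}$-blocks having spectral radius $1$ (tensor and twist both act trivially there, exactly as recorded in the proof of Proposition \ref{reduc2}). Hence $\rho(\varphi^H) = \max(\rho(\varphi^H_2), 1)$, and it remains to show $\rho(\varphi^H_2) = 1$.

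For the $H^2$-part, we have $\varphi^H_2 = (T_{\mathcal{O}_C}^H)_2 \circ (\mathcal{L}^H)_2$ where $(T_{\mathcal{O}_C}^H)_2$ is the reflection $s_C : w_2 \mapsto w_2 + ([C]\cdot w_2)[C]$ (an involution, since $[C]^2 = -2$) and $(\mathcal{L}^H)_2 = \Id$ — tensoring by a line bundle acts on the degree-$2$ part of cohomology as the identity, because $\exp \chern_1(\mathcal{L}) = 1 + \chern_1(\mathcal{L}) + \tfrac12\chern_1(\mathcal{L})^2$ contributes to $H^2$ only the identity term (the degree-$2$ component of the output of multiplying $w_2$ by this is just $w_2$). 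Therefore $\varphi^H_2 = s_C$ is an involution, so $(\varphi^H_2)^{\circ 2} = \Id$, whence every eigenvalue of $\varphi^H_2$ is a root of unity and $\rho(\varphi^H_2) = 1$. This last step is precisely the finiteness-of-Coxeter-group argument of Theorem \ref{theoremactioncoh} specialized to the rank-one case, so I could alternatively just cite Theorem \ref{theoremactioncoh} with $f = \Id$ and observe that $\rho(f^*) = \rho(\Id) = 1$.

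I expect the only genuinely delicate point — really a bookkeeping point rather than a mathematical obstacle — to be the passage from "$S$ with $A$-$D$-$E$ configurations" (the hypothesis of Theorem \ref{theoremactioncoh}) to "any smooth projective surface $S$" (the statement of the corollary). I would handle this by noting that the entire argument above is intrinsic to the single curve $C$ and the data $([C], \chern_1(\mathcal{L}))$ on $H^*(S,\matQ)$: one does not invoke the global structure of $Z$ at all, because we never need $(b^H)_2$ to have finite order for a \emph{general} element $b \in B$ — we only need $s_C$ itself to be an involution, which follows from $[C]^2 = -2$ alone (adjunction and $C \simeq \matP^1$). So I would present the proof as a self-contained specialization: write $\varphi^H$ in block form relative to a graded basis $(1, e_1, \dots, e_k, [x])$ of $H^{2*}(S,\matQ)$, observe block-triangularity with unipotent corner blocks, reduce to the $H^2$-block, and conclude $\rho(\varphi^H) = 1$ from $(\varphi^H_2)^{\circ 2} = \Id$. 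This keeps the corollary honest for arbitrary surfaces while reusing the formulas (\ref{twistoncoh}), (\ref{mukpair}) established in the proof of Theorem \ref{theoremactioncoh}.
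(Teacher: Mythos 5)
Your proof is correct, and it follows the same underlying computation as the paper: the paper disposes of this corollary in one sentence by citing Theorem \ref{theoremactioncoh}, whose proof contains exactly the block-triangularity and reflection formulas you reuse. The one genuine difference is that you notice, and repair, a mismatch the paper glosses over: Theorem \ref{theoremactioncoh} is stated only for surfaces whose union of $(-2)$-curves is a disjoint union of finite $A$-$D$-$E$ configurations, while the corollary claims ``any smooth projective surface,'' so a literal citation of the theorem does not suffice. Your fix --- observing that for a single twist one never needs finiteness of the Coxeter group generated by all the reflections $(T^H_{\mathcal{O}_{C'}})_2$, only that $s_C(w_2)=w_2+([C]\cdot w_2)[C]$ is an involution because $[C]^2=-2$, and that $(\mathcal{L}^H)_2=\Id$ since the degree-$2$ component of $w_2\cdot\exp\chern_1(\mathcal{L})$ is $w_2$ --- is exactly the right localization of the argument, and it makes the corollary honest in the stated generality at essentially no extra cost. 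The remaining steps (block lower-triangularity of $\varphi^H$ in a graded basis, unipotent or identity action on $H^0$, $H^4$ and $H^{\mathrm{odd}}$, hence $\rho(\varphi^H)=\rho(\varphi^H_2)\vee 1=1$) match the proof of Proposition \ref{reduc2} verbatim. This is a complete and, if anything, more careful proof than the paper's.
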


\vspace{1cm}


\end{document}